\documentclass{paper}

\usepackage{amsmath,amsthm}
\usepackage{amssymb}
\usepackage{bm}
\usepackage{booktabs}
\usepackage{array}
\usepackage{multirow}
\usepackage{enumitem}
\usepackage[colorlinks,linkcolor=blue,citecolor=blue,urlcolor=blue]{hyperref}
\usepackage{cleveref}

\newtheorem{theorem}{Theorem}[section]
\newtheorem{lemma}[theorem]{Lemma}

\newtheorem{proposition}[theorem]{Proposition}
\theoremstyle{definition}
\newtheorem{definition}[theorem]{Definition}
\newtheorem{assumption}[theorem]{Assumption}
\theoremstyle{remark}
\newtheorem{remark}[theorem]{Remark}
\crefname{assumption}{Assumption}{Assumptions}
\Crefname{assumption}{Assumption}{Assumptions}

\newcommand{\headers}[2]{}
\newcommand{\email}[1]{\texttt{#1}}
\newenvironment{MSCcodes}{\small\paragraph*{MSC codes:}}{\par\bigskip}
\numberwithin{equation}{section}

\newcommand{\R}{\mathbb{R}}
\newcommand{\N}{\mathbb{N}}
\newcommand{\dx}{\,dx}
\newcommand{\norm}[1]{\|#1\|}
\newcommand{\snorm}[2]{\|#1\|_{#2}}
\newcommand{\VN}{V_N}
\newcommand{\PN}{P_N}
\newcommand{\PNo}{P_N^{0}}
\newcommand{\lbd}{\underline{\lambda}}
\newcommand{\Hone}{H^1}

\DeclareMathOperator{\spn}{span}

\headers{Lower Eigenvalue Bounds for Spectral Galerkin Methods}{X. Liu}

\title{Guaranteed Lower Eigenvalue Bounds for Spectral Galerkin Methods
  with Application to Schr\"odinger Operators \thanks{Submitted to the editors \today.}}

\author{Xuefeng Liu\thanks{Tokyo Woman's Christian University, 2-6-1
    Zempukuji, Suginami-ku, Tokyo 167-8585, Japan
    (\email{xfliu@lab.twcu.ac.jp}).}}

\begin{document}
\maketitle

\begin{abstract}
Spectral Galerkin methods are renowned for high-precision eigenvalue
approximation, yet a rigorous lower bound obtained \emph{directly} from a
spectral discretisation has remained unavailable: the classical Kato and
Weinstein--Temple enclosures do apply, but require a~priori information on
a neighbouring eigenvalue.  This paper resolves the issue by extending the author's
projection-based framework for guaranteed lower eigenvalue bounds---so far
realised only through finite element methods---to conforming spectral
Galerkin methods.  For trial spaces of exact
eigenfunctions the required projection constant is the closed-form optimal
value $C_N=\lambda_{M+1}^{-1/2}$, the inverse square root of the first
omitted eigenvalue.  For $-\Delta+V$ with $0\le V\in L^\infty$, a
\emph{projection-gap estimate} yields an explicit constant for the
standard Galerkin matrix (exact at $V=0$), and a composite discretisation
removes the $\norm{V}_{L^\infty}$-dependence for large potentials.  With
Neumann domain truncation these give certified two-sided bounds on
$\R^d$; for two benchmark potentials on $\R^2$ the spectral enclosures
match or surpass certified finite element ones at two orders of magnitude
fewer degrees of freedom.  The same auxiliary-projector mechanism extends
to singular potentials with an unbounded $L^\infty$ norm---in particular to
attractive Coulomb singularities in three dimensions, via a localised Hardy
inequality---which we develop in a companion paper.
\end{abstract}

\begin{keywords}
eigenvalue bounds, spectral Galerkin method, verified computation,
projection error constant, Schr\"odinger operator, domain truncation
\end{keywords}

\begin{MSCcodes}
65N25, 65N35, 65G20, 35P15, 81Q05
\end{MSCcodes}

\section{Introduction}
\label{sec:intro}

The eigenvalues of self-adjoint differential operators govern vibration
frequencies, stability thresholds, spectral gaps of quantum systems, and the
a~posteriori error constants used in computer-assisted proofs for partial
differential equations.  The Rayleigh--Ritz principle makes certified
\emph{upper} bounds easy: every conforming Galerkin eigenvalue lies above
the corresponding exact one.  Certified \emph{lower} bounds are
substantially harder.  Classical approaches---Temple--Kato bounds
\cite{Temple1928,Kato1949}, the Lehmann--Goerisch method
\cite{Lehmann1963,BehnkeGoerisch1994}, Weinberger's and Kuttler--Sigillito's
constructions \cite{Weinberger1956,KuttlerSigillito1965}, and homotopy-based
enclosures \cite{Plum1991,NakaoPlumWatanabe2019}---require a~priori spectral
information (e.g.\ a rough lower bound for the next eigenvalue) or
problem-specific auxiliary constructions (e.g.\ a base problem with a closed-form eigensystem).

A different route was opened by the early ideas of Birkhoff et al.\ \cite{Birkhoff_etal1966}, Kikuchi--Liu \cite{Kikuchi+Liu2007, liu-kikuchi-2010}, Liu--Oishi \cite{LiuOishi2013}, Kobayashi \cite{Kobayashi2015}, and Carstensen et al.\ \cite{CarstensenGallistl2014,CarstensenGedicke2014}, all seeking eigenvalue bounds without a~priori spectral information. This approach is
formalised in an abstract Hilbert-space framework by Liu
\cite{Liu2015,Liu2024book}: if the Ritz projection $P_h$ onto the trial
space satisfies an explicit projection-error estimate
$\norm{(I-P_h)v}_b\le C_h\norm{(I-P_h)v}_a$, then every Galerkin eigenvalue
$\lambda_{k,h}$ yields, \emph{unconditionally}, the bound
\begin{equation}
\label{eq:liu-bound-intro}
  \lambda_k \;\ge\; \frac{\lambda_{k,h}}{1+C_h^2\,\lambda_{k,h}},
  \qquad k=1,2,\ldots,\dim V_h.
\end{equation}
No a~priori information on the spectrum is needed.  In existing
realisations of \eqref{eq:liu-bound-intro} the constant $C_h$ arises from the error estimation of finite element methods (FEM) on meshes: it is obtained
either from \emph{global error estimation via the hypercircle method or interpolation error constants for conforming FEMs} or from \emph{local interpolation estimates for non-conforming FEMs}.  Examples include the
Crouzeix--Raviart and enriched Crouzeix--Raviart elements for the Laplacian
and Schr\"odinger operators \cite{Liu2015,Liu2024book,LiuJSIAM2026},
Crouzeix--Raviart pairs for the Stokes problem \cite{XieXieLiu2018},
trace-type constants for the Steklov problem \cite{YouXieLiu2019}, and the
Fujino--Morley element for the biharmonic operator \cite{Liu2024book}; see
also \cite{CarstensenGedicke2014,CarstensenGallistl2014} for closely related
guaranteed FEM lower bounds.  These methods converge at the algebraic rate
dictated by low-order interpolation, typically $O(h^2)$ in the eigenvalue.

\emph{Spectral Galerkin methods}, which use global trial functions such as
trigonometric or orthogonal polynomial bases, achieve much faster
convergence of the Galerkin eigenvalues for smooth problems
\cite{ShenTangWang2011,Boyd2001}.  Yet they have remained outside the
guaranteed-lower-bound framework: the mechanism producing $C_h$ from local
mesh geometry has no spectral counterpart, and to our knowledge no explicit
projection constant for a spectral Galerkin space has been published.  This
paper closes that gap.

\subsection*{Contributions}

The contributions are fourfold.

\begin{enumerate}[label=(\roman*),leftmargin=2.2em]
\item \emph{The spectral projection constant (\cref{lem:spectral-CN}).}
For a trial space spanned by the first $M$ exact eigenfunctions, the
optimal projection-error constant is $C_N=\lambda_{M+1}^{-1/2}$ (with $\lambda_{M+1}$ the first omitted eigenvalue), obtained by a one-line Parseval
argument, with no mesh or interpolation theory.  Elementary in itself, it
is the building block for the two mechanisms below, where the spectrum is
no longer known.

\item \emph{The projection-gap mechanism for $-\Delta+V$, $V\ge0$
(\cref{thm:gap-CN}).}
The conceptual core.  When the trigonometric trial space is not an
eigenbasis, the modal-tail argument of (i) fails for the Ritz projection
$\PN$ of the full form; we instead control the \emph{projection gap}
$\PNo u-\PN u$ between the Laplacian (modal) projection $\PNo$ and $\PN$
through an auxiliary discrete solution operator
\cite[Chap.~4]{Liu2024book}. 

\item \emph{Removing the $\norm{V}_{L^\infty}$-dependence
(\cref{thm:schrodinger-main}).}
After truncation, confining potentials make the factor in (ii) large by
design.  A composite discretisation---a bandlimited under-approximation
sampled through a \emph{slaved weighted projection}---restores the
sharp constant $\nu_*^{-1/2}$ with \emph{no} $\norm{V}_{L^\infty}$
dependence.  With Neumann truncation
\cite{LiuJSIAM2026,NakaoPlumWatanabe2019} this gives certified two-sided
bounds on $\R^d$.

\item \emph{Extension to singular potentials.}
The same mechanism extends, through a fixed shift and a localised Hardy
inequality, to potentials with an unbounded $L^\infty$ norm---including
attractive Coulomb singularities in three dimensions, where the form
factor stays finite and the certified bound converges at the optimal
rate.  This application to quantum-chemistry potentials is developed in a
companion paper; \cref{rem:coulomb-companion} previews the mechanism.
\end{enumerate}

\subsection*{Why spectral instead of FEM?}

For the problem class targeted here---smooth potentials on simple truncated
domains---the comparison is summarised as follows.  The FEM constant
$C_h=O(h)$ is typically produced by local interpolation estimates whose constants
must be derived element by element, and the certified gap closes at
$O(h^2)$, i.e.\ $O(\mathrm{DOF}^{-2/d})$ in dimension $d$.  The spectral
constant $C_N=O(1/N)$ is a single closed-form number, the
upper bounds converge spectrally (super-algebraically) for smooth
potentials, and the certified gap closes at $O(N^{-2})$ with $N$ the number
of modes \emph{per direction}, i.e.\ $O(\mathrm{DOF}^{-2/d})$ with a far
smaller constant and with exact assembly (no quadrature error) for
polynomial potentials.   In return, FEM retains the advantage on
complicated geometries and for strongly singular potentials; the two
approaches are complementary.

\subsection*{Relation to existing work}

The framework \eqref{eq:liu-bound-intro} originates in
\cite{LiuOishi2013,Liu2015}; the monograph \cite{Liu2024book} gives a
systematic treatment, including the auxiliary-operator technique for
positive zeroth-order coefficients that inspires the projection-gap
analysis of \cref{sec:schrodinger}.  Domain truncation with Neumann
lower bounds for confining potentials is taken from \cite{LiuJSIAM2026};
the treatment of Coulomb-type singular potentials---beyond the scope of this paper and taken up in the
companion paper---is from \cite{LiuNM2026}.
Standard
references for spectral methods are \cite{ShenTangWang2011,Boyd2001}.
To our knowledge, the spectral constant $C_N=\lambda_{M+1}^{-1/2}$, the
projection-gap constant of \cref{thm:gap-CN}, and the composite theorem
\cref{thm:schrodinger-main} are new.

\subsection*{Outline}

\Cref{sec:framework} states and proves the abstract lower-bound theorem.
\Cref{sec:spectral} treats eigenbasis trial spaces: the sharp constant
$C_N$, the closed-form one-dimensional bounds, and the tensor-product
construction on rectangles.  \Cref{sec:schrodinger} develops the
projection-gap mechanism for Schr\"odinger operators with $V\ge0$, and
then the composite refinement for large potentials together with the
truncation machinery for unbounded domains.  \Cref{sec:numerics} presents one- and two-dimensional
numerical results and the comparison with FEM.  \Cref{sec:sharpness}
proves sharpness, monotone convergence, and the index-saturation limit.  \Cref{sec:conclusion}
concludes.

\section{The abstract lower-bound theorem}
\label{sec:framework}

Throughout this section $\widehat V$ is a real vector space equipped with
two symmetric, positive semi-definite bilinear forms
$\widehat a(\cdot,\cdot)$ and $\widehat b(\cdot,\cdot)$; we write
$\snorm{\cdot}{\widehat a}$ and $\snorm{\cdot}{\widehat b}$ for the induced
seminorms.  The reader may keep in mind the conforming model case
$\widehat V=V$ a Hilbert space, $\widehat a=a$ an inner product and
$\widehat b(u,v)=b(\gamma u,\gamma v)$ with a compact operator
$\gamma:V\to W$ into a second Hilbert space, as in
\cite{Liu2015,Liu2024book}; the slightly more general seminorm setting
costs nothing in the proof and is needed for the composite-space argument
of \cref{subsec:composite}.

\begin{assumption}
\label{ass:abstract}{\quad}

\begin{enumerate}[label=(A\arabic*),ref=(A\arabic*),leftmargin=2.4em]
\item \label{a1} There exist countably many \emph{exact eigenpairs}
$(\lambda_j,u_j)\in(0,\infty)\times\widehat V$, $j=1,2,\ldots$, with
$0<\lambda_1\le\lambda_2\le\cdots$, satisfying the Gram relations
\begin{equation}
\label{eq:exact-evp}
  \widehat b(u_i,u_j)=\delta_{ij},
  \qquad
  \widehat a(u_i,u_j)=\lambda_j\,\delta_{ij},
  \qquad i,j\ge1 .
\end{equation}
(In the conforming model case these follow from the eigenvalue equation
$\widehat a(u_j,v)=\lambda_j\widehat b(u_j,v)$ for all $v$, after
$\widehat b$-normalisation; only \eqref{eq:exact-evp} is used below.)
\item \label{a2} $\widehat V_N\subset\widehat V$ is a finite-dimensional
subspace on which $\widehat a$ and $\widehat b$ are positive definite, and the
\emph{discrete eigenvalues} $0<\lambda_{1,N}\le\cdots\le\lambda_{M,N}$
($M=\dim\widehat V_N$) are defined by the matrix eigenvalue problem for
$(\widehat a,\widehat b)$ restricted to $\widehat V_N$.
\item \label{a3} For each $k\le M$ there is a linear map
$\Pi_N:E_k\to\widehat V_N$, $E_k:=\spn\{u_1,\ldots,u_k\}$, such that for
all $\phi\in E_k$,
\begin{align}
  &\widehat a(\Pi_N\phi,\,\phi-\Pi_N\phi) = 0
  \qquad\text{(Pythagoras property)},
  \label{eq:hyp-pythagoras}\\
  &\snorm{\phi-\Pi_N\phi}{\widehat b} \le
  C_N\,\snorm{\phi-\Pi_N\phi}{\widehat a}
  \qquad\text{(projection-error estimate)}.
  \label{eq:hyp-estimate}
\end{align}
\end{enumerate}
\end{assumption}

In the conforming model case, $\Pi_N$ is the Ritz ($\widehat
a$-orthogonal) projection and \eqref{eq:hyp-pythagoras} is automatic; the
formulation above additionally admits maps $\Pi_N$ that are orthogonal
projections only componentwise, which is exactly what the composite
construction of \cref{subsec:composite} provides.

\begin{theorem}[Projection-based lower bound; cf.\ {\cite[Thm.~2.1]{Liu2015}},
  {\cite[Chap.~3]{Liu2024book}}]
\label{thm:liu}
Under \cref{ass:abstract}, for each $k=1,\ldots,M$,
\begin{equation}
\label{eq:liu-bound}
  \lambda_k \;\ge\; \frac{\lambda_{k,N}}{1+C_N^2\,\lambda_{k,N}} .
\end{equation}
\end{theorem}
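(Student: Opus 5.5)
The plan is to use the min--max principle on the discrete side, with $\Pi_N(E_k)$ as a $k$-dimensional test subspace of $\widehat V_N$. Fix $k\le M$ and set $E_k=\spn\{u_1,\ldots,u_k\}$. By \eqref{eq:exact-evp}, every $\phi=\sum_{j\le k}c_ju_j\in E_k$ satisfies
\[
\snorm{\phi}{\widehat a}^2=\sum_j\lambda_jc_j^2\le\lambda_k\snorm{\phi}{\widehat b}^2 .
\]
The discrete eigenvalues come from a symmetric positive definite pencil on $\widehat V_N$ (\ref{a2}), so the Courant--Fischer characterisation applies:
\[
\lambda_{k,N}=\min_{S\subset\widehat V_N,\ \dim S=k}\ \max_{0\ne w\in S}\frac{\snorm{w}{\widehat a}^2}{\snorm{w}{\widehat b}^2}.
\]
It therefore suffices to show two things: that $\Pi_N$ is injective on $E_k$, so $S:=\Pi_NE_k$ has dimension $k$, and that for all $\phi\in E_k$,
\[
\snorm{\Pi_N\phi}{\widehat a}^2\le\frac{\lambda_k}{1-C_N^2\lambda_k}\,\snorm{\Pi_N\phi}{\widehat b}^2 .
\]
Rearranging this gives \eqref{eq:liu-bound}.

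\textbf{Key estimate.} Write $e=\phi-\Pi_N\phi$. The Pythagoras property \eqref{eq:hyp-pythagoras} gives
\[
\snorm{\phi}{\widehat a}^2=\snorm{\Pi_N\phi}{\widehat a}^2+\snorm{e}{\widehat a}^2 .
\]
Applying the triangle inequality for the seminorm $\snorm{\cdot}{\widehat b}$ and then \eqref{eq:hyp-estimate},
\[
\snorm{\phi}{\widehat b}\le\snorm{\Pi_N\phi}{\widehat b}+C_N\snorm{e}{\widehat a}.
\]
Set $x=\snorm{\Pi_N\phi}{\widehat a}$, $y=\snorm{e}{\widehat a}$ and $z=\snorm{\Pi_N\phi}{\widehat b}$, and normalise $\snorm{\phi}{\widehat b}=1$. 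Then $x^2+y^2\le\lambda_k$ and $1\le z+C_Ny$. The goal is $x^2/z^2\le\lambda_k/(1-C_N^2\lambda_k)$ whenever $C_N^2\lambda_k<1$; if instead $C_N^2\lambda_k\ge1$, then $\lambda_{k,N}/(1+C_N^2\lambda_{k,N})<C_N^{-2}\le\lambda_k$ and there is nothing to prove. I expect this two-dimensional optimisation to be the main obstacle. It is better handled by Cauchy--Schwarz than by brute force:
\[
1\le z+C_Ny\le\sqrt{z^2/x^2+C_N^2}\,\sqrt{x^2+y^2}\le\sqrt{z^2/x^2+C_N^2}\,\sqrt{\lambda_k}.
\]
This yields $z^2/x^2\ge 1/\lambda_k-C_N^2$, which is exactly the bound needed. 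The case $x=0$ is trivial.

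\textbf{Injectivity.} Injectivity falls out of the same inequality chain. If $\Pi_N\phi=0$ with $\snorm{\phi}{\widehat b}=1$, then $z=0$ and $1\le C_Ny\le C_N\sqrt{\lambda_k}$, which contradicts $C_N^2\lambda_k<1$. Hence $\dim S=k$. Courant--Fischer then gives $\lambda_{k,N}\le\lambda_k/(1-C_N^2\lambda_k)$, which rearranges to \eqref{eq:liu-bound}.

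Only the seminorm properties, \eqref{eq:exact-evp} and \ref{a3} are used, so the argument covers the componentwise projector setting as well.
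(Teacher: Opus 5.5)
Your proof is correct and is essentially the paper's argument. Both use the discrete min--max principle on the test space $\Pi_N(E_k)$, together with the same chain: triangle inequality, projection-error estimate, Cauchy--Schwarz in $\R^2$, and the Pythagoras property. The differences are cosmetic: you work with $\lambda$ rather than the reciprocals $\kappa=1/\lambda$, and you split on whether $C_N^2\lambda_k\ge1$ (deriving injectivity of $\Pi_N$ on $E_k$ otherwise), where the paper splits on injectivity directly (deriving $\kappa_k\le C_N^2$ in the non-injective case).
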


\begin{proof}
Write $\kappa_j=1/\lambda_j$, $\kappa_{j,N}=1/\lambda_{j,N}$ for the
reciprocal (exact and discrete) eigenvalues. In this notation,
the lower bound \eqref{eq:liu-bound} reads $\kappa_k\le \kappa_{k,N}+C_N^2$.
Define
$$
\widehat R(v):=\frac{\snorm{v}{\widehat b}^2}{\snorm{v}{\widehat a}^2} \text{ (whenever $\snorm{v}{\widehat a}>0$)}.
$$
 By \eqref{eq:exact-evp}, every
$\phi=\sum_{j\le k}c_ju_j\in E_k$
satisfies $\snorm{\phi}{\widehat b}^2=\sum c_j^2$ and
$\snorm{\phi}{\widehat a}^2=\sum\lambda_jc_j^2>0$, whence
\begin{equation}
\label{eq:exact-minmax}
  \min_{0\ne\phi\in E_k}\widehat R(\phi)=\kappa_k .
\end{equation}
By the max--min principle for the pencil
$(\widehat b,\widehat a)$ on $\widehat V_N$,
\begin{equation}
\label{eq:discrete-minmax}
  \kappa_{k,N}
  =\max_{\substack{S\subset\widehat V_N\\ \dim S=k}}\;
   \min_{0\ne v\in S}\widehat R(v) .
\end{equation}

\emph{Case 1: $\Pi_N$ is not injective on $E_k$.}  Pick $0\ne\phi\in E_k$
with $\Pi_N\phi=0$.  Then \eqref{eq:hyp-estimate} gives
$\snorm{\phi}{\widehat b}\le C_N\snorm{\phi}{\widehat a}$, so by
\eqref{eq:exact-minmax} $\kappa_k\le\widehat R(\phi)\le C_N^2\le
\kappa_{k,N}+C_N^2$.

\emph{Case 2: $\Pi_N$ is injective on $E_k$.}  Then
$S:=\Pi_N(E_k)\subset\widehat V_N$ has dimension $k$, and by
\eqref{eq:discrete-minmax} there exists $\phi^*\in E_k$, $\phi^*\ne0$,
with
\begin{equation}
\label{eq:phistar}
  \widehat R(\Pi_N\phi^*)\;\le\;\kappa_{k,N}.
\end{equation}
(Choose $\phi^*$ so that $\Pi_N\phi^*$ minimises $\widehat R$ over
$S\setminus\{0\}$; then $\min_{v\in S}\widehat R(v)\le\kappa_{k,N}$ by
\eqref{eq:discrete-minmax} applied to the candidate subspace $S$.)
Using the triangle inequality in $\snorm{\cdot}{\widehat b}$,
\eqref{eq:hyp-estimate}, \eqref{eq:phistar}, and the Cauchy--Schwarz
inequality in $\R^2$,
\[
  \snorm{\phi^*}{\widehat b}
  \le \snorm{\Pi_N\phi^*}{\widehat b}
     +\snorm{\phi^*-\Pi_N\phi^*}{\widehat b}
  \le \sqrt{\kappa_{k,N}}\,\snorm{\Pi_N\phi^*}{\widehat a}
     +C_N\,\snorm{\phi^*-\Pi_N\phi^*}{\widehat a}
\]
\[
  \le \sqrt{\kappa_{k,N}+C_N^2}\;
      \sqrt{\snorm{\Pi_N\phi^*}{\widehat a}^2
            +\snorm{\phi^*-\Pi_N\phi^*}{\widehat a}^2}
  \;=\;\sqrt{\kappa_{k,N}+C_N^2}\;\snorm{\phi^*}{\widehat a},
\]
where the last equality is the Pythagoras property
\eqref{eq:hyp-pythagoras}.  Hence
$\kappa_k\le\widehat R(\phi^*)\le\kappa_{k,N}+C_N^2$ by
\eqref{eq:exact-minmax}.

In both cases $\kappa_k\le\kappa_{k,N}+C_N^2$, which is
\eqref{eq:liu-bound} after inversion.
\end{proof}

\begin{remark}[Quality of the bound]
\label{rem:gap-formula}
The relative gap of \eqref{eq:liu-bound} is
\begin{equation}
\label{eq:relative-gap}
  \frac{\lambda_k-\lbd_{k,N}}{\lambda_k}
  \;\le\;
  \frac{C_N^2\lambda_{k,N}}{1+C_N^2\lambda_{k,N}},
  \qquad
  \lbd_{k,N}:=\frac{\lambda_{k,N}}{1+C_N^2\lambda_{k,N}} .
\end{equation}
The dimensionless product $C_N\sqrt{\lambda_{k,N}}$ governs everything:
the bound is useful precisely when $C_N^2\lambda_{k,N}\ll1$.  All the work
in applications goes into making $C_N$ explicit and small; this
is what \cref{sec:spectral,sec:schrodinger} accomplish for spectral
Galerkin methods.
\end{remark}

\begin{remark}[No a~priori information]
Unlike Temple--Kato or Lehmann--Goerisch bounds, \cref{thm:liu} needs no
lower bound for $\lambda_{k+1}$, no separation assumption, and no rough
localisation of the spectrum.  This unconditional character is inherited
by all bounds in this paper.
\end{remark}

\section{Spectral Galerkin spaces spanned by exact eigenfunctions}
\label{sec:spectral}

This section treats the conforming model case: $V$ a Hilbert space with
inner product $a$, $W$ a Hilbert space with inner product $b$,
$\gamma:V\to W$ compact with dense range, and the eigenproblem
\begin{equation}
\label{eq:conforming-evp}
  a(u,v)=\lambda\,b(\gamma u,\gamma v)\qquad\forall v\in V,
\end{equation}
with eigenvalues $0<\lambda_1\le\lambda_2\le\cdots\to\infty$ and
$a$-orthonormal eigenfunctions $\{\phi_j\}_{j\ge1}$ forming a complete
system in $V$ (we assume $b(\gamma\cdot,\gamma\cdot)$ positive definite on
$V$; the semi-definite case is treated in \cite[Chap.~3]{Liu2024book}).
With $\widehat V=V$, $\widehat a=a$,
$\widehat b=b(\gamma\cdot,\gamma\cdot)$, \cref{ass:abstract}\ref{a1} holds
after $b$-renormalisation of the $\phi_j$.

\subsection{The sharp spectral constant}

Let $\Lambda_N\subset\N$ be a finite \emph{retained index set} with
$|\Lambda_N|=M$ and define the eigenbasis trial space
\begin{equation}
\label{eq:VN-eigenbasis}
  \VN=\spn\{\phi_j:\,j\in\Lambda_N\}\subset V .
\end{equation}
The Ritz projection $\PN:V\to\VN$, $a(v-\PN v,w_N)=0$ for all
$w_N\in\VN$, is the modal truncation
$\PN v=\sum_{j\in\Lambda_N}a(v,\phi_j)\,\phi_j$.

\begin{lemma}[Spectral constant via modal tail estimate]
\label{lem:spectral-CN}
Let
\begin{equation}
\label{eq:lambda-star}
  \lambda_*:=\min\{\lambda_j:\,j\notin\Lambda_N\} .
\end{equation}
Then for all $v\in V$,
\begin{equation}
\label{eq:tail-estimate}
  \snorm{\gamma(I-\PN)v}{b}^2\;\le\;\frac1{\lambda_*}\,
  \snorm{(I-\PN)v}{a}^2 ,
\end{equation}
and the constant $C_N=\lambda_*^{-1/2}$ is optimal: it is the smallest
constant for which \eqref{eq:tail-estimate} holds.
\end{lemma}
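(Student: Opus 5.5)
The plan is a Parseval computation in the eigenbasis, followed by a single test function for optimality.

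Since the $\phi_j$ are $a$-orthonormal and complete in $V$, every $v\in V$ has the expansion $v=\sum_j c_j\phi_j$ with $c_j=a(v,\phi_j)$, and $\snorm{v}{a}^2=\sum_j c_j^2$. The projection $\PN$ is the modal truncation, so the tail is
\[
w:=(I-\PN)v=\sum_{j\notin\Lambda_N}c_j\phi_j,
\qquad
\snorm{w}{a}^2=\sum_{j\notin\Lambda_N}c_j^2 .
\]

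Next I need the $b$-Gram relations. Testing \eqref{eq:conforming-evp} for $\phi_j$ with $v=\phi_i$ gives
\[
\delta_{ij}=a(\phi_j,\phi_i)=\lambda_j\, b(\gamma\phi_j,\gamma\phi_i),
\qquad\text{so}\qquad
b(\gamma\phi_i,\gamma\phi_j)=\delta_{ij}/\lambda_j .
\]

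To compute $\snorm{\gamma w}{b}^2$ I must pass $\gamma$ through the infinite sum. This is justified because $\gamma$ is bounded (indeed compact), so $\gamma w=\sum_{j\notin\Lambda_N}c_j\gamma\phi_j$ converges in $W$. The family $\{\sqrt{\lambda_j}\,\gamma\phi_j\}$ is $b$-orthonormal, so Parseval (or Bessel, which already suffices for the inequality) applies. This yields
\[
\snorm{\gamma w}{b}^2=\sum_{j\notin\Lambda_N}\frac{c_j^2}{\lambda_j}
\le\frac{1}{\lambda_*}\sum_{j\notin\Lambda_N}c_j^2=\frac{1}{\lambda_*}\snorm{w}{a}^2 ,
\]
which is \eqref{eq:tail-estimate}. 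The minimum defining $\lambda_*$ is attained because $\lambda_j\to\infty$ and $\Lambda_N$ is finite, so the complement is nonempty.

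For optimality, pick an index $j_*\notin\Lambda_N$ with $\lambda_{j_*}=\lambda_*$ and take $v=\phi_{j_*}$. Then $\PN v=0$, and the Gram relation gives
\[
\snorm{\gamma v}{b}^2=\frac{1}{\lambda_*}=\frac{1}{\lambda_*}\snorm{v}{a}^2 .
\]
So equality holds, and no constant smaller than $\lambda_*^{-1/2}$ can work.

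The only step that is not pure algebra is the interchange of $\gamma$ with the infinite expansion. Continuity of $\gamma$ settles it, so I do not expect a real obstacle.
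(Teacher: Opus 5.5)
Your proposal is correct and follows the paper's proof essentially verbatim: you expand $(I-\PN)v$ in the $a$-orthonormal eigenbasis, use $b(\gamma\phi_i,\gamma\phi_j)=\delta_{ij}/\lambda_j$ with Parseval in both norms, and test with $v=\phi_{j_*}$ for optimality, and your continuity-of-$\gamma$ justification fills in a step the paper leaves implicit. One small slip: your parenthetical claim that Bessel's inequality suffices is backwards, since Bessel gives the lower bound $\snorm{\gamma w}{b}^2\ge\sum_{j\notin\Lambda_N}c_j^2/\lambda_j$; what you need is the equality, which you get by passing to the limit in the finite Pythagoras identities for the partial sums, using continuity of $\gamma$.
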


\begin{proof}
By completeness, $(I-\PN)v=\sum_{j\notin\Lambda_N}a(v,\phi_j)\phi_j$, and
by \eqref{eq:conforming-evp}, $b(\gamma\phi_i,\gamma\phi_j)
=\delta_{ij}/\lambda_j$.  Parseval in both norms gives
\[
  \snorm{\gamma(I-\PN)v}{b}^2
  =\sum_{j\notin\Lambda_N}\frac{|a(v,\phi_j)|^2}{\lambda_j}
  \;\le\;\frac1{\lambda_*}\sum_{j\notin\Lambda_N}|a(v,\phi_j)|^2
  =\frac1{\lambda_*}\snorm{(I-\PN)v}{a}^2 .
\]
For optimality take $v=\phi_{j_*}$ with $\lambda_{j_*}=\lambda_*$: then
$\PN v=0$ and equality holds in \eqref{eq:tail-estimate}.
\end{proof}

Specialising the abstract bound \eqref{eq:liu-bound} to the eigenbasis
space, where the Galerkin eigenvalues reproduce the exact ones
($\lambda_{k,N}=\lambda_{j(k)}$, $j(k)$ the $k$-th smallest index in
$\Lambda_N$), gives
\begin{equation}
\label{eq:eigenbasis-bound}
  \lambda_k\;\ge\;\lbd_{k,N}
  =\frac{\lambda_{k,N}\,\lambda_*}{\lambda_{k,N}+\lambda_*},
  \qquad k=1,\ldots,M .
\end{equation}
Since $\lambda_{k,N}=\lambda_k$ is already exact here,
\eqref{eq:eigenbasis-bound} is not a means of bounding unknown
eigenvalues; it serves only to expose, on a problem with a fully known
spectrum, how much the bound formula concedes relative to the exact
value---a calibration of the constant $C_N$ that we use as the benchmark
below.

Both the FEM constant $C_h$ and the spectral constant $C_N$ feed the same
formula \eqref{eq:liu-bound}, but their origins differ fundamentally:
for non-conforming FEMs, $C_h$ is \emph{local} (element-by-element interpolation; e.g.\
$C_h\le0.1893\,h$ for the Crouzeix--Raviart element in 2D
\cite{Liu2015}) while $C_N$ is \emph{global}, determined by the spectral
gap at the truncation boundary, and is \emph{exact} for eigenbasis
spaces.

\subsection{A fundamental benchmark: the Dirichlet Laplacian}
\label{subsec:laplacian-benchmark}
For $-u''=\lambda u$ on $(0,1)$, $u(0)=u(1)=0$:
$V=H_0^1(0,1)$, $a(u,v)=\int_0^1u'v'\dx$, $W=L^2(0,1)$, $\gamma$ the
embedding.  Exact eigenpairs $\lambda_k=k^2\pi^2$,
$\phi_k\propto\sin(k\pi x)$.  With
$\VN=\spn\{\sin(j\pi x)\}_{j=1}^N$ (so $\Lambda_N=\{1,\ldots,N\}$,
$\lambda_*=(N+1)^2\pi^2$), \eqref{eq:eigenbasis-bound} gives the
closed-form certified bound
\begin{equation}
\label{eq:1d-closed-form}
  \lambda_k\;\ge\;\lbd_{k,N}=\frac{k^2\pi^2}{1+k^2/(N+1)^2},
  \qquad k=1,\ldots,N,
\end{equation}
with relative gap exactly $k^2/(k^2+(N+1)^2)=O(k^2/N^2)$---a closed-form
expression involving no numerical eigenvalue computation, hence verifiable
by inspection.

The Laplacian case is exactly solvable and serves two purposes: it
calibrates the framework (the bounds \eqref{eq:1d-closed-form} are sharp;
see \cref{sec:sharpness}), and it supplies the comparison eigenvalues
$\nu_j$ used in the next section, where the actual target operator has a
potential and is no longer exactly solvable.

\section{Schr\"odinger operators with nonnegative potentials}
\label{sec:schrodinger}

We now turn to the main subject of the paper, the Schr\"odinger eigenvalue
problem. Given a box domain $\Omega:=\prod_{i=1}^d(-L_i,L_i)\subset\R^d$
and $\mathcal V=\Hone(\Omega)$ (Neumann) or $\mathcal V=H_0^1(\Omega)$
(Dirichlet), find $u \in \mathcal V$ and $\mu \in \R$ such that
\begin{equation}
\label{eq:schro-evp}
  a(u,v)=\mu\,(u,v)_{L^2(\Omega)}
  \qquad\forall v\in\mathcal V,
\end{equation}
where $a(u,v):=a_0(u,v)+(Vu,v)_{L^2}$ and $a_0(u,v):=(\nabla u,\nabla v)$.
Throughout this section, it is assumed that
\begin{equation}
\label{eq:V-assumption}
  V\in L^\infty(\Omega),\qquad V\ge0\ \text{a.e.},\qquad
  V>0\ \text{on some open subset of }\Omega .
\end{equation}
The eigenvalues of \eqref{eq:schro-evp} are denoted
$0<\mu_1\le\mu_2\le\cdots$.  The Neumann setting is the one required by
domain truncation for operators on $\R^d$ (\cref{subsec:truncation}).

It will be convenient to quantify the \emph{fluctuation} of the potential
over the box by its variance under the uniform measure on $\Omega$.  Writing
$\langle f\rangle_\Omega:=|\Omega|^{-1}\int_\Omega f\,dx$, set
\begin{equation}
\label{eq:var-def}
  \operatorname{Var}_\Omega(V)
  :=\langle V^2\rangle_\Omega-\langle V\rangle_\Omega^2
  =\frac1{|\Omega|}\int_\Omega\bigl(V-\langle V\rangle_\Omega\bigr)^2\,dx
  \;\ge\;0,
\end{equation}
the mean-square deviation of $V$ from its own box-average.  The fluctuation $\operatorname{Var}_\Omega(V)$ vanishes for a constant potential and is insensitive to the additive
constant in $V$, isolating the part of $V$ that couples distinct Laplacian
modes.  This is the quantity that will govern the size of the refined form
factor $\eta_V'$ (\cref{rem:opt-etaV}).

\medskip

Let $\{\varphi_j\}_{j}$ denote the $L^2$-orthonormal eigenfunctions of the
Laplacian on $\Omega$ with the chosen boundary condition (tensor cosines
for Neumann, tensor sines for Dirichlet) and $\nu_j$ the corresponding
eigenvalues; on a box these are explicit.  For Neumann conditions,
\[
  \varphi_{\bm m}(x)=\prod_{i=1}^d c_{m_i}
  \cos\Bigl(\frac{m_i\pi(x_i+L_i)}{2L_i}\Bigr),
  \qquad
  \nu_{\bm m}=\sum_{i=1}^d\frac{m_i^2\pi^2}{4L_i^2},
  \qquad \bm m\in\N_0^d,
\]
with $c_0=(2L_i)^{-1/2}$, $c_m=L_i^{-1/2}$ ($m\ge1$); for Dirichlet,
sines with $\bm m\in\N^d$ and $\nu_{\bm m}=\sum m_i^2\pi^2/(4L_i^2)$.
Fix the retained index set $\Lambda_N=\{\bm m:\,m_i\le N\}$ and set
\begin{equation}
\label{eq:VN-cos}
  \VN:=\spn\{\varphi_{\bm m}:\bm m\in\Lambda_N\},
  \qquad M=\dim\VN,
  \qquad
  \nu_*:=\min_{\bm m\notin\Lambda_N}\nu_{\bm m} .
\end{equation}
The \emph{standard spectral Galerkin discretisation} of
\eqref{eq:schro-evp} seeks $(\lambda_{k,N},u_N)\in\R\times\VN$ with
\begin{equation}
\label{eq:schro-galerkin}
  a(u_N,v_N)=\lambda_{k,N}\,(u_N,v_N)\qquad\forall v_N\in\VN ,
\end{equation}
i.e.\ the dense symmetric matrix eigenproblem $(K+P)\mathbf c
=\lambda_{k,N}\mathbf c$ with diagonal kinetic part
$K_{\bm m\bm m}=\nu_{\bm m}$ and full potential matrix
$P_{\bm m\bm m'}=(V\varphi_{\bm m},\varphi_{\bm m'})$.  By conformity,
$\lambda_{k,N}\ge\mu_k$: the Galerkin eigenvalues are certified
\emph{upper} bounds, converging spectrally fast for smooth $V$
\cite{ShenTangWang2011}.

\subsection{Two projections onto \texorpdfstring{$\VN$}{VN} and the
projection gap}
\label{subsec:gap}

Two projections onto the same trial space play distinct roles:
\begin{itemize}[leftmargin=2em]
\item [a)] the \emph{Laplacian (modal) projection}
$\PNo:\mathcal V\to\VN$, i.e.\ the $L^2$-orthogonal truncation of the
$\{\varphi_{\bm m}\}$-expansion; it is simultaneously orthogonal for
$a_0$ (gradients of distinct modes are orthogonal), it is the best
approximation in the $a_0$-seminorm, and it obeys the exact spectral-gap
estimate
\begin{equation}
\label{eq:modal-tail}
  \norm{u-\PNo u}_{L^2}\;\le\;\nu_*^{-1/2}\,\norm{\nabla(u-\PNo u)}_{L^2},
\end{equation}
since $u-\PNo u$ contains only modes with $\nu_{\bm m}\ge\nu_*$;
\item [b)] the \emph{Ritz projection} $\PN:\mathcal V\to\VN$ of the full form,
$a(u-\PN u,v_N)=0$ for all $v_N\in\VN$, which is the projection required
by \cref{thm:liu} but for which no modal tail estimate is available,
because the $\varphi_{\bm m}$ are not eigenfunctions of $-\Delta+V$.
\end{itemize}
When $V\ne0$ the two projections differ, and the whole difficulty is to
control the \emph{projection gap} $e:=\PNo u-\PN u\in\VN$.  The key is
the following algebraic identity, which transfers the gap to data
controlled by the \emph{modal} projection; it is the spectral counterpart
of the auxiliary-operator technique used in the FEM analysis of positive
zeroth-order coefficients in \cite[Chap.~4]{Liu2024book}.

\begin{lemma}[Gap identity]
\label{lem:gap-identity}
For every $u\in\mathcal V$ and every $v_N\in\VN$,
\begin{equation}
\label{eq:gap-identity}
  a(e,v_N)=\bigl(V(\PNo u-u),\,v_N\bigr)_{L^2},
  \qquad e=\PNo u-\PN u .
\end{equation}
\end{lemma}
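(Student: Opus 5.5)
The plan is to expand $a(e,v_N)$ by splitting $a=a_0+(V\cdot,\cdot)_{L^2}$ and then use the defining orthogonality of each projection. The identity is purely algebraic: no estimate is required, only that both $\PNo u$ and $\PN u$ lie in $\VN$ and that $v_N\in\VN$ is an admissible test function for both projections.

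The first step uses the Ritz projection. By definition of $\PN$, $a(\PN u,v_N)=a(u,v_N)$ for all $v_N\in\VN$. Hence
\[
  a(e,v_N)=a(\PNo u,v_N)-a(u,v_N)=a(\PNo u-u,v_N).
\]

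The second step splits the full form:
\[
  a(\PNo u-u,v_N)=a_0(\PNo u-u,v_N)+\bigl(V(\PNo u-u),v_N\bigr)_{L^2}.
\]
It remains to show that the first term vanishes, i.e.\ that $\PNo$ is $a_0$-orthogonal onto $\VN$. This is the property recorded in item a) of \cref{subsec:gap}.

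To justify it, I would expand $u=\sum_{\bm m}(u,\varphi_{\bm m})\varphi_{\bm m}$ in $L^2$, so that $u-\PNo u$ contains only modes $\bm m\notin\Lambda_N$. It suffices to check $v_N=\varphi_{\bm m'}$ with $\bm m'\in\Lambda_N$. Since $\varphi_{\bm m'}$ is a smooth Laplacian eigenfunction satisfying the chosen boundary condition, Green's formula gives, for every $w\in\mathcal V$,
\[
  a_0(w,\varphi_{\bm m'})=\nu_{\bm m'}(w,\varphi_{\bm m'})_{L^2}.
\]
In the Neumann case there is no boundary term because $\partial_n\varphi_{\bm m'}=0$; in the Dirichlet case there is none because $w\in H_0^1$. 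Taking $w=u-\PNo u$, the right-hand side vanishes by $L^2$-orthogonality, since $(u-\PNo u,\varphi_{\bm m'})_{L^2}=0$ for $\bm m'\in\Lambda_N$.

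This identity-for-all-$w$ argument is the only point requiring care, and I expect it to be the main obstacle only in the sense of rigour. It avoids any need to argue convergence of the eigen-expansion in the $H^1$ norm; the weak eigenvalue relation handles everything directly. Combining the three steps yields \eqref{gap-identity}, written in the paper's notation as $a(e,v_N)=\bigl(V(\PNo u-u),v_N\bigr)_{L^2}$.
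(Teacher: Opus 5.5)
Your proposal is correct and matches the paper's proof: split $a(e,v_N)=a(\PNo u-u,v_N)+a(u-\PN u,v_N)$, drop the second term by the Ritz property, and drop the $a_0$ part of the first term by the $a_0$-orthogonality of the modal projection. Your Green's-formula justification via $a_0(w,\varphi_{\bm m'})=\nu_{\bm m'}(w,\varphi_{\bm m'})_{L^2}$ for all $w\in\mathcal V$ simply makes explicit the orthogonality that the paper takes as given in item a).
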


\begin{proof}
$a(e,v_N)=a(\PNo u-u,v_N)+a(u-\PN u,v_N)$.  The second term vanishes by
the definition of $\PN$.  In the first term,
$a_0(\PNo u-u,v_N)=0$ by the $a_0$-orthogonality of the modal
projection, leaving only the potential part \eqref{eq:gap-identity}.
\end{proof}

The gap $e$ is therefore controlled by a single scalar $\eta_V$, the
\emph{form factor} of the potential relative to the trial space.

\begin{definition}[Potential form factor]
\label{def:etaV}
For the form $a$ on $\mathcal V$, let $\eta_V\ge0$ denote any constant
such that
\begin{equation}
\label{eq:etaV}
  |(Vw,v_N)|\;\le\;\eta_V\,\norm{w}_{L^2}\,\norm{v_N}_a
  \qquad\forall\,w\in L^2(\Omega),\ v_N\in\VN .
\end{equation}
\end{definition}

\begin{lemma}[Projection gap]
\label{lem:proj-gap}
Let $\lambda_{1,N}>0$ be the smallest Galerkin eigenvalue of
\eqref{eq:schro-galerkin}.  If \eqref{eq:etaV} holds, then for all
$u\in\mathcal V$,
\begin{equation}
\label{eq:proj-gap-est}
  \norm{\PNo u-\PN u}_{L^2}
  \;\le\;
  g\;\nu_*^{-1/2}\,\norm{\nabla(u-\PNo u)}_{L^2} ,
  \qquad
  g:=\lambda_{1,N}^{-1/2}\,\eta_V .
\end{equation}
\end{lemma}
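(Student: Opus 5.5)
Test the gap identity of \cref{lem:gap-identity} with $v_N=e$ itself. Since $e=\PNo u-\PN u\in\VN$, this gives
\[
  \norm{e}_a^2=a(e,e)=\bigl(V(\PNo u-u),\,e\bigr)_{L^2}.
\]
The right-hand side is of exactly the form controlled by the form factor: apply \eqref{eq:etaV} with $w=\PNo u-u\in L^2(\Omega)$ and $v_N=e$. This yields
\[
  \norm{e}_a^2\le\eta_V\,\norm{u-\PNo u}_{L^2}\,\norm{e}_a .
\]
If $e=0$ there is nothing to prove. Otherwise divide by $\norm{e}_a$ to get
\[
  \norm{e}_a\le\eta_V\norm{u-\PNo u}_{L^2}.
\]

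Next convert the $a$-norm of $e$ into its $L^2$ norm using the discrete Rayleigh quotient. Since $e\in\VN$ and $\lambda_{1,N}$ is the smallest eigenvalue of the symmetric pencil \eqref{eq:schro-galerkin} on $\VN$, the min principle gives
\[
  a(e,e)\ge\lambda_{1,N}\norm{e}_{L^2}^2 ,
  \qquad\text{i.e.}\qquad
  \norm{e}_{L^2}\le\lambda_{1,N}^{-1/2}\norm{e}_a .
\]
Here we need $\lambda_{1,N}>0$, which is assumed in the statement. Combining the two bounds gives
\[
  \norm{e}_{L^2}\le\lambda_{1,N}^{-1/2}\,\eta_V\,\norm{u-\PNo u}_{L^2}.
\]

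Finally apply the modal tail estimate \eqref{eq:modal-tail}, $\norm{u-\PNo u}_{L^2}\le\nu_*^{-1/2}\norm{\nabla(u-\PNo u)}_{L^2}$. This produces \eqref{eq:proj-gap-est} with $g=\lambda_{1,N}^{-1/2}\eta_V$.

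No step is a serious obstacle. The only point needing care is that $\norm{\cdot}_a$ is a genuine norm on $\VN$, which is guaranteed by $V\ge0$ together with $\lambda_{1,N}>0$. The other is that \eqref{eq:modal-tail} holds for all $u\in\mathcal V$, which follows because $u-\PNo u$ has only modes with $\nu_{\bm m}\ge\nu_*$ and gradients of distinct modes are $L^2$-orthogonal. A Parseval argument as in \cref{lem:spectral-CN} confirms both points.
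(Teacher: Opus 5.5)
Your proposal is correct and follows essentially the same route as the paper's proof: test the gap identity with $v_N=e$, apply the form-factor bound \eqref{eq:etaV} with $w=\PNo u-u$, then use the discrete Rayleigh bound $\norm{e}_{L^2}\le\lambda_{1,N}^{-1/2}\norm{e}_a$ and the modal-tail estimate \eqref{eq:modal-tail}. One small correction to your closing remark: the positivity of $\norm{\cdot}_a$ on $\VN$ comes from the min principle with $\lambda_{1,N}>0$, not from a Parseval argument.
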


\begin{proof}
Testing the gap identity \eqref{eq:gap-identity} with
$v_N=e:=\PNo u-\PN u\in\VN$ and using \eqref{eq:etaV} with
$w=\PNo u-u$,
\[
  \norm{e}_a^2=(V(\PNo u-u),e)
  \le\eta_V\,\norm{u-\PNo u}_{L^2}\,\norm{e}_a ,
\]
hence $\norm{e}_a\le\eta_V\norm{u-\PNo u}_{L^2}$.  The discrete
Rayleigh bound $\norm{e}_{L^2}\le\lambda_{1,N}^{-1/2}\norm{e}_a$ and the
modal-tail estimate \eqref{eq:modal-tail} then give
\eqref{eq:proj-gap-est}.
\end{proof}

\begin{lemma}[Form factor for bounded potentials]
\label{lem:bounded-etaV}
Let $0\le V\in L^\infty(\Omega)$ and set
\begin{equation}
\label{eq:X-def}
  X:=\frac{\norm{V}_{L^\infty}}{\lambda_{1,N}} .
\end{equation}
Then the form-factor inequality \eqref{eq:etaV} holds with each of the two
constants
\begin{equation}
\label{eq:bounded-etaV-two}
  \eta_V^{(1)}=\norm{V}_{L^\infty}\,\lambda_{1,N}^{-1/2},
  \qquad
  \eta_V^{(2)}=\norm{V}_{L^\infty}^{1/2}.
\end{equation}
Consequently the associated gap constant $g=\lambda_{1,N}^{-1/2}\eta_V$ of
\cref{lem:proj-gap} may be taken as
\begin{equation}
\label{eq:g-min}
  g=\min\!\bigl(X,\sqrt X\bigr),
  \qquad\text{so}\qquad
  g^2=\min\!\bigl(X^2,X\bigr),
\end{equation}
which is the value entering the projection-error constant $C_N^{(V)}$ of
\cref{thm:gap-CN}.
\end{lemma}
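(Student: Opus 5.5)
The plan is to verify the form-factor inequality \eqref{eq:etaV} separately for each constant in \eqref{eq:bounded-etaV-two}, starting from a Cauchy--Schwarz estimate for $|(Vw,v_N)|$. Once both constants are available, the formula for $g$ follows by taking the smaller one. Throughout, the only discrete input is the Rayleigh bound on $\VN$,
\[
  \norm{v_N}_{L^2}^2\le\lambda_{1,N}^{-1}\norm{v_N}_a^2 ,
\]
which holds because $\lambda_{1,N}$ is the minimum of $\norm{v_N}_a^2/\norm{v_N}_{L^2}^2$ over $\VN$ for the Galerkin problem \eqref{eq:schro-galerkin}.

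For $\eta_V^{(1)}$, apply the plain $L^2$ Cauchy--Schwarz inequality:
\[
  |(Vw,v_N)|\le\norm{V}_{L^\infty}\norm{w}_{L^2}\norm{v_N}_{L^2} .
\]
Inserting the Rayleigh bound gives $\norm{V}_{L^\infty}\lambda_{1,N}^{-1/2}\norm{w}_{L^2}\norm{v_N}_a$, which is \eqref{eq:etaV} with $\eta_V^{(1)}$.

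For $\eta_V^{(2)}$, use $V\ge0$ to split the weight as $V=V^{1/2}V^{1/2}$ and apply Cauchy--Schwarz to the two factors:
\[
  |(Vw,v_N)|\le\norm{V^{1/2}w}_{L^2}\,\norm{V^{1/2}v_N}_{L^2} .
\]
The first factor is at most $\norm{V}_{L^\infty}^{1/2}\norm{w}_{L^2}$. For the second, $\norm{V^{1/2}v_N}_{L^2}^2=(Vv_N,v_N)\le a_0(v_N,v_N)+(Vv_N,v_N)=\norm{v_N}_a^2$, since $a_0\ge0$. Together these give \eqref{eq:etaV} with $\eta_V^{(2)}=\norm{V}_{L^\infty}^{1/2}$. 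This is the only point where the sign condition $V\ge0$ is used; it makes $\norm{\cdot}_a$ dominate the $V$-weighted norm without any discrete eigenvalue.

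Finally, multiply each constant by $\lambda_{1,N}^{-1/2}$ as prescribed in \cref{lem:proj-gap}. This gives
\[
  \lambda_{1,N}^{-1/2}\eta_V^{(1)}=X,
  \qquad
  \lambda_{1,N}^{-1/2}\eta_V^{(2)}=\sqrt X .
\]
Since any valid $\eta_V$ may be used in \cref{lem:proj-gap}, we may take $g=\min(X,\sqrt X)$, and squaring yields \eqref{eq:g-min}. No step is a real obstacle. The only point needing care is the $V^{1/2}$ splitting in the second bound, which needs $V\ge0$ and $V\in L^\infty$ so that $V^{1/2}w\in L^2$ for every $w\in L^2(\Omega)$.
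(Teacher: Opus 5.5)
Your proposal is correct and follows the paper's proof essentially step for step: the first constant comes from $|V|\le\norm{V}_{L^\infty}$ plus the discrete Rayleigh bound, the second from Cauchy--Schwarz in the $V$-weighted semi-inner product (your $V^{1/2}$ splitting) with $(Vv_N,v_N)\le\norm{v_N}_a^2$, and then $g=\min(X,\sqrt X)$. One small quibble is that $V\ge0$ is not used only in the second bound, since it also underlies $\norm{\cdot}_a$ being a norm and $\lambda_{1,N}>0$ in the Rayleigh bound, but this does not affect the argument.
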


\begin{proof}
Both bounds estimate the bilinear form $(Vw,v_N)$ for $w\in L^2(\Omega)$
and $v_N\in\VN$; they differ in how the factor $V$ is distributed between
the two arguments.

\emph{First bound (mass on $w$, Rayleigh quotient on $v_N$).}
Since $0\le V\le\norm{V}_{L^\infty}$, 
\[
  |(Vw,v_N)|
  \;\le\;\norm{V}_{L^\infty}\,\norm{w}_{L^2}\,\norm{v_N}_{L^2}.
\]
Because $v_N\in\VN$ and $\lambda_{1,N}>0$ is the smallest Galerkin
eigenvalue, the discrete Rayleigh bound
$\norm{v_N}_{L^2}\le\lambda_{1,N}^{-1/2}\norm{v_N}_a$ holds; substituting it
gives
\[
  |(Vw,v_N)|
  \;\le\;\norm{V}_{L^\infty}\lambda_{1,N}^{-1/2}\,
  \norm{w}_{L^2}\,\norm{v_N}_a ,
\]
which is \eqref{eq:etaV} with $\eta_V^{(1)}=\norm{V}_{L^\infty}
\lambda_{1,N}^{-1/2}$.  The resulting gap constant is
$g=\lambda_{1,N}^{-1/2}\eta_V^{(1)}=\norm{V}_{L^\infty}/\lambda_{1,N}=X$.

\emph{Second bound (split $V^{1/2}$ symmetrically).}
Introduce the seminorm
$\norm{u}_V^2:=(Vu,u)$.  Since $V\ge0$, Cauchy--Schwarz for the
semi-inner product $(V\cdot,\cdot)$ gives
$|(Vw,v_N)|\le\norm{w}_V\,\norm{v_N}_V$.  The two seminorms are controlled
separately: on the one hand
$\norm{w}_V^2=(Vw,w)\le\norm{V}_{L^\infty}\norm{w}_{L^2}^2$, and on the
other hand
\[
  \norm{v_N}_V^2=(Vv_N,v_N)\;\le\;
  \norm{\nabla v_N}_{L^2}^2+(Vv_N,v_N)=a(v_N,v_N)=\norm{v_N}_a^2 ,
\]
the inequality using only $\norm{\nabla v_N}_{L^2}^2\ge0$.  Hence
\[
  |(Vw,v_N)|\;\le\;\norm{V}_{L^\infty}^{1/2}\,
  \norm{w}_{L^2}\,\norm{v_N}_a ,
\]
which is \eqref{eq:etaV} with $\eta_V^{(2)}=\norm{V}_{L^\infty}^{1/2}$,
independent of $\lambda_{1,N}$.  The resulting gap constant is
$g=\lambda_{1,N}^{-1/2}\eta_V^{(2)}
=\bigl(\norm{V}_{L^\infty}/\lambda_{1,N}\bigr)^{1/2}=\sqrt X$.

\emph{Combining.}  Both constants are admissible in \eqref{eq:etaV}, so
the smaller resulting $g$ may be used, giving
$g=\min(X,\sqrt X)$.  The two regimes exchange dominance at $X=1$: for
$X\le1$ (the potential is moderate relative to $\lambda_{1,N}$) one has
$X\le\sqrt X$, so the first bound wins and $g=X$; for $X\ge1$ (a large
potential) $\sqrt X\le X$, so the second bound wins and $g=\sqrt X$.  In
both cases $g^2=\min(X^2,X)$, which is precisely the term appearing under
the square root of $C_N^{(V)}=\sqrt{(1+\min(X^2,X))/\nu_*}$ in
\cref{thm:gap-CN}.
\end{proof}

\begin{lemma}[Tail-refined form factor]
\label{lem:opt-etaV}
Define the \emph{tail-refined form factor}
\begin{equation}
\label{eq:etaV-tail}
  \eta_V':=\sup_{0\ne v_N\in\VN}
   \frac{\norm{(I-\PNo)Vv_N}_{L^2}}{\norm{v_N}_a},
\end{equation}
the largest relative $L^2$-size of the part of $Vv_N$ that is not resolved
by $\VN$.  Then the projection-gap estimate \eqref{eq:proj-gap-est} of
\cref{lem:proj-gap}---and consequently the projection-error constant
$C_N^{(V)}$ of \cref{thm:gap-CN}---hold with $g$ replaced by
\begin{equation}
\label{eq:g-tail}
  g'=\lambda_{1,N}^{-1/2}\,\eta_V',
  \qquad\text{i.e. } (g')^2=\eta_V'^2/\lambda_{1,N}.
\end{equation}
\end{lemma}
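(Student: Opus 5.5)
The plan is to rerun the proof of \cref{lem:proj-gap}, changing only the step where \eqref{eq:etaV} was applied to the right-hand side of the gap identity \eqref{eq:gap-identity}. The key observation is that the residual $w:=\PNo u-u$ is $L^2$-orthogonal to $\VN$, since $\PNo$ is the $L^2$-orthogonal modal truncation. Therefore only the component of $Vv_N$ lying outside $\VN$ interacts with $w$.

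Concretely, I take $e=\PNo u-\PN u\in\VN$ and test \eqref{eq:gap-identity} with $v_N=e$. This gives
\[
  \norm{e}_a^2=(V(\PNo u-u),e)=(\PNo u-u,\,Ve) .
\]
The second equality holds because multiplication by the real bounded function $V$ is $L^2$-symmetric. Since $\PNo u-u\perp\VN$ in $L^2$, I may subtract $\PNo(Ve)\in\VN$ from the second argument without changing the value:
\[
  \norm{e}_a^2=\bigl(\PNo u-u,\,(I-\PNo)Ve\bigr)
  \le\norm{u-\PNo u}_{L^2}\,\norm{(I-\PNo)Ve}_{L^2}
  \le\eta_V'\,\norm{u-\PNo u}_{L^2}\,\norm{e}_a .
\]
The last step is exactly the definition \eqref{eq:etaV-tail}. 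One point needs checking: $Ve\in L^2(\Omega)$, so that $\PNo$ (extended as the $L^2$-orthogonal projection onto $\VN$) is applicable. This holds because $V\in L^\infty$.

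Dividing by $\norm{e}_a$ gives $\norm{e}_a\le\eta_V'\norm{u-\PNo u}_{L^2}$; the case $e=0$ is trivial. From here the argument is verbatim that of \cref{lem:proj-gap}. The discrete Rayleigh bound gives $\norm{e}_{L^2}\le\lambda_{1,N}^{-1/2}\norm{e}_a$, and the modal-tail estimate \eqref{eq:modal-tail} gives $\norm{u-\PNo u}_{L^2}\le\nu_*^{-1/2}\norm{\nabla(u-\PNo u)}_{L^2}$. Together these yield \eqref{eq:proj-gap-est} with $g'=\lambda_{1,N}^{-1/2}\eta_V'$.

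The claim about $C_N^{(V)}$ then follows without further work. \Cref{thm:gap-CN} uses \cref{lem:proj-gap} only through the inequality \eqref{eq:proj-gap-est}, so any admissible $g$ may be substituted there.

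The main, though mild, obstacle is justifying the orthogonality step. One must be precise that $\PNo$ acts on $\mathcal V$ as the $L^2$-orthogonal projection, so that $(u-\PNo u,z)=0$ for all $z\in\VN$. One must also note that the symmetric rewriting moves $V$ onto the discrete function $e$. Both facts are immediate from the modal definition of $\PNo$ in \cref{subsec:gap}.
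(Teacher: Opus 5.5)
Your proposal is correct and follows essentially the same route as the paper. Both proofs use the $L^2$-symmetry of multiplication by $V$ and the $L^2$-orthogonality of the residual $\PNo u-u$ to $\VN$ to replace $Ve$ by $(I-\PNo)Ve$, and then rerun the proof of \cref{lem:proj-gap} verbatim; you carry this out directly with $v_N=e$ rather than phrasing it as the form-factor inequality restricted to $w\in\VN^{\perp}$, and you add the small check that $Ve\in L^2(\Omega)$, so that $\PNo$ applies, which the paper leaves implicit.
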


\begin{proof}
In the proof of \cref{lem:proj-gap} the form factor is used only to bound
$(Vw,v_N)$ for the modal residual $w=\PNo u-u\in\VN^{\perp}$.  For such $w$,
$L^2$-orthogonality to $\VN$ removes the resolved part of $Vv_N$, so
$(Vw,v_N)=(w,Vv_N)=\bigl(w,(I-\PNo)Vv_N\bigr)$, and by Cauchy--Schwarz
together with the definition \eqref{eq:etaV-tail},
\[
  |(Vw,v_N)|\le\norm{w}_{L^2}\,\norm{(I-\PNo)Vv_N}_{L^2}
  \le\eta_V'\,\norm{w}_{L^2}\,\norm{v_N}_a .
\]
This is exactly the inequality \eqref{eq:etaV} restricted to the residual
$w\in\VN^{\perp}$, which is all that the proof of \cref{lem:proj-gap}
invokes.  Its conclusion \eqref{eq:proj-gap-est}, and hence
\cref{thm:gap-CN}, therefore hold with $g$ replaced by
$g'=\lambda_{1,N}^{-1/2}\eta_V'$.
\end{proof}

\begin{remark}[Computation of $\eta_V'$]
\label{rem:etaV-compute}
The supremum \eqref{eq:etaV-tail} is a standard generalised eigenvalue
problem on the same matrices used to assemble the operator.  Let
$\{\varphi_{\bm i}\}$ be the $L^2$-orthonormal basis of $\VN$, form the
standard Galerkin matrix $A=K+P$ with $P_{\bm i\bm j}=(V\varphi_{\bm i},
\varphi_{\bm j})$, and let $Q_{\bm i\bm j}=(V^2\varphi_{\bm i},
\varphi_{\bm j})$ be the Galerkin matrix of $V^2$ (assembled from the same
closed-form moments, exactly for polynomial $V$).  Writing
$v_N=\sum_{\bm i}c_{\bm i}\varphi_{\bm i}$, the $L^2$-orthonormality gives
$\norm{v_N}_a^2=c^{\!\top}\!Ac$, $\norm{Vv_N}_{L^2}^2=c^{\!\top}\!Qc$, and
$\norm{\PNo Vv_N}_{L^2}^2=c^{\!\top}\!P^2c$, so
\[
  \norm{(I-\PNo)Vv_N}_{L^2}^2
  =\norm{Vv_N}_{L^2}^2-\norm{\PNo Vv_N}_{L^2}^2
  =c^{\!\top}(Q-P^2)c .
\]
Here $Q-P^2$ is the Gram matrix on $\VN$ of the residuals $(I-\PNo)Vv_N$,
hence symmetric positive semidefinite.  Maximising the resulting Rayleigh
quotient over $c\ne0$ turns \eqref{eq:etaV-tail} into the closed form
\begin{equation}
\label{eq:etaV-tail-matrix}
  \eta_V'=\sqrt{\lambda_{\max}(Q-P^2,\,A)},
\end{equation}
the square root of the largest eigenvalue of the symmetric--definite pencil
$(Q-P^2,\,A)$.  Feeding $\eta_V'$ into \cref{thm:gap-CN} thus costs one
extra largest-eigenvalue solve on the same standard matrix $K+P$.
\end{remark}

\begin{remark}[Efficiency of $\eta_V'$ under refinement]
\label{rem:opt-etaV}
Because $\eta_V'$ measures only the unresolved part of $Vv_N$, it is set by
the potential's fluctuation over the box relative to the spectral gap,
$\eta_V'^2\sim\operatorname{Var}_\Omega(V)/\nu_*$.  The fluctuation is fixed
while $\nu_*$ grows with $N$, so unlike the $N$-independent
$\min(X,\sqrt X)$ of \cref{lem:bounded-etaV} the constant $\eta_V'$
\emph{shrinks under refinement}.  This is borne out by
\cref{tab:formfactor}: the tail-refined $(g')^2=\eta_V'^2/\lambda_{1,N}$
falls from $16.4$ to $12.0$ for $V_1$ and from $326$ to $290$ for $V_2$ as
$N$ grows from $48$ to $64$, so its lower bound improves faster than the
crude $\min(X,\sqrt X)$ and closes on the composite bound;
\cref{subsec:gap-limitation,subsec:2d-benchmarks} give the details.
\end{remark}

\begin{remark}[Relation to the auxiliary-operator formulation]
\label{rem:GN}
The linear factor $\eta_V=\norm{V}_{L^\infty}\lambda_{1,N}^{-1/2}$ can
equivalently be obtained through the discrete solution operator
$G_N:\VN\to\VN$, $a(G_N\phi_N,v_N)=(\phi_N,v_N)$, with
$\norm{G_N\phi_N}_a\le\lambda_{1,N}^{-1/2}\norm{\phi_N}_{L^2}$; this is
the spectral form of the FEM technique for positive zeroth-order
coefficients in \cite[Chap.~4]{Liu2024book}.  For Dirichlet conditions
an \emph{a~priori} variant replaces $\lambda_{1,N}$ by the first
Laplacian eigenvalue $\nu_1>0$, giving the computation-free factor
$\eta_V=\norm{V}_{L^\infty}\nu_1^{-1/2}$; since $\lambda_{1,N}\ge\nu_1$
the computable factor is at least as sharp, and it is the only option for
Neumann conditions, where $\nu_1=0$.  The abstract form
\eqref{eq:etaV} is what makes the singular case accessible: $\eta_V$ can
stay finite even when $\norm{V}_{L^\infty}=\infty$, as for the Coulomb
potentials previewed in \cref{rem:coulomb-companion}.
\end{remark}

\begin{theorem}[Projection-error constant for $-\Delta+V$, standard
matrix]
\label{thm:gap-CN}
Let \eqref{eq:V-assumption} hold.  Then for all $u\in\mathcal V$,
\begin{equation}
\label{eq:gap-CN}
  \norm{(I-\PN)u}_{L^2}\;\le\;C_N^{(V)}\,\norm{(I-\PN)u}_a,
  \qquad
  C_N^{(V)}:=\sqrt{\frac{1+\min(X^2,X)}{\nu_*}},
\end{equation}
with $X$ as in \eqref{eq:X-def}.  Consequently, with the eigenvalues
$\lambda_{k,N}$ of the standard Galerkin matrix $K+P$,
\begin{equation}
\label{eq:gap-lower-bound}
  \mu_k\;\ge\;\frac{\lambda_{k,N}}{1+(C_N^{(V)})^2\,\lambda_{k,N}},
  \qquad k=1,\ldots,M .
\end{equation}
\end{theorem}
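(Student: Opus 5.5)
The plan is to split the Ritz error $(I-\PN)u$ into the modal residual and the projection gap, and to bound each piece by the $a_0$-seminorm of the modal residual. Write
\[
  u-\PN u=(u-\PNo u)+e,\qquad e:=\PNo u-\PN u\in\VN .
\]
Since $\PNo$ is the $L^2$-orthogonal truncation onto $\VN$, the residual $u-\PNo u$ is $L^2$-orthogonal to $\VN$, and in particular to $e$. This gives the exact Pythagorean splitting
\[
  \norm{u-\PN u}_{L^2}^2=\norm{u-\PNo u}_{L^2}^2+\norm{e}_{L^2}^2 .
\]
This orthogonality is what lets the two contributions add in squares, producing $1+g^2$ rather than $(1+g)^2$.

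Next I bound the two pieces. The first is handled by the modal-tail estimate \eqref{eq:modal-tail}: $\norm{u-\PNo u}_{L^2}^2\le\nu_*^{-1}\norm{\nabla(u-\PNo u)}_{L^2}^2$. The second is handled by \cref{lem:proj-gap}: $\norm{e}_{L^2}^2\le g^2\nu_*^{-1}\norm{\nabla(u-\PNo u)}_{L^2}^2$. By \cref{lem:bounded-etaV}, $g^2$ may be taken to be $\min(X^2,X)$. Adding the two bounds gives
\[
  \norm{u-\PN u}_{L^2}^2\le\frac{1+\min(X^2,X)}{\nu_*}\,\norm{\nabla(u-\PNo u)}_{L^2}^2 .
\]

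The remaining and key step is to replace the modal-residual gradient by the Ritz error measured in the full energy norm. I expect this to be the only place where care is needed. Two facts close it. First, $\PNo$ is the best approximation from $\VN$ in the $a_0$-seminorm, so $\norm{\nabla(u-\PNo u)}_{L^2}\le\norm{\nabla(u-\PN u)}_{L^2}$, because $\PN u\in\VN$. Second, $V\ge0$ gives $\norm{\nabla w}_{L^2}^2\le a(w,w)=\norm{w}_a^2$. Chaining these yields \eqref{eq:gap-CN}.

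For the eigenvalue bound \eqref{eq:gap-lower-bound}, I apply \cref{thm:liu} in the conforming model case, with $\widehat V=\mathcal V$, $\widehat a=a$, $\widehat b=(\cdot,\cdot)_{L^2}$ and $\Pi_N=\PN$ restricted to $E_k$. I need to check the three parts of \cref{ass:abstract}:
\begin{itemize}
  \item For \ref{a1}: on the bounded box $\Omega$ the embedding $\mathcal V\hookrightarrow L^2$ is compact. The form $a$ is an inner product on $\mathcal V$: in the Neumann case, if $a(w,w)=0$ then $w$ is constant, and $V>0$ on an open set forces $w=0$. Hence the eigenpairs $(\mu_j,u_j)$ exist with the Gram relations.
  \item For \ref{a2}: $a$ and the $L^2$ inner product are positive definite on $\VN$, and the discrete eigenvalues are those of $K+P$.
  \item For \ref{a3}: the Pythagoras property \eqref{eq:hyp-pythagoras} is automatic for the Ritz projection. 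The estimate \eqref{eq:hyp-estimate} with $C_N=C_N^{(V)}$ is the first part of the theorem.
\end{itemize}
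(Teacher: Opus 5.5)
Your proposal is correct and follows the paper's proof step for step. It uses the same $L^2$-orthogonal splitting $(I-\PN)u=(u-\PNo u)+e$, the modal-tail estimate together with \cref{lem:proj-gap} with $g^2=\min(X^2,X)$, the chain $\norm{\nabla(u-\PNo u)}\le\norm{\nabla(u-\PN u)}\le\norm{(I-\PN)u}_a$, and then \cref{thm:liu} with $\Pi_N=\PN$; your explicit check of \ref{a1}--\ref{a3}, including positivity of $a$ in the Neumann case from $V>0$ on an open set, just spells out what the paper compresses into ``the conforming setting.''
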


\begin{proof}
Write $w:=(I-\PN)u$ and decompose $w=(u-\PNo u)+e$ with
$e=\PNo u-\PN u\in\VN$.  Since $u-\PNo u\perp_{L^2}\VN$, the two parts
are $L^2$-orthogonal, so by \eqref{eq:modal-tail} and
\cref{lem:proj-gap},
\[
  \norm{w}_{L^2}^2
  =\norm{u-\PNo u}_{L^2}^2+\norm{e}_{L^2}^2
  \;\le\;\frac{1+g^2}{\nu_*}\,\norm{\nabla(u-\PNo u)}^2 .
\]
Finally, $\PNo$ is the best approximation in the $a_0$-seminorm and
$V\ge0$, hence
\[
  \norm{\nabla(u-\PNo u)}\le\norm{\nabla(u-\PN u)}\le\norm{w}_a ,
\]
which gives \eqref{eq:gap-CN} with $g^2=\min(X^2,X)$.  The eigenvalue
bound follows from \cref{thm:liu} in the conforming setting
($\Pi_N=\PN$, the Ritz projection).
\end{proof}

\begin{remark}[Exact recovery at $V=0$ and the price of the potential]
\label{rem:V0-recovery}
For $V=0$ one has $X=0$ and $C_N^{(0)}=\nu_*^{-1/2}$, the \emph{optimal}
constant of \cref{lem:spectral-CN}: the gap mechanism degrades gracefully
to the sharp eigenbasis result.  For $V\ne0$ the constant exceeds the
optimum only through the factor $\sqrt{1+\min(X^2,X)}$; for moderate
potentials ($X\lesssim1$) this is a $O(X^2)$ perturbation.  Everything is
available from one assembly and one eigensolve of the \emph{standard}
matrix $K+P$: no second discretisation, no auxiliary constants, and the
factor improves automatically as $\lambda_{1,N}$ is computed.
\end{remark}

\subsection{Limitation: truncated confining potentials}
\label{subsec:gap-limitation}

After domain truncation (\cref{subsec:truncation}) the potential must
dominate the target eigenvalues outside $\Omega$, so
$\norm{V}_{L^\infty(\Omega)}\ge\sigma(\Omega)>\mu_k$ and typically
$X\gg1$.  Then $g=\sqrt X$ and
$(C_N^{(V)})^2=(1+X)/\nu_*$: the certified gap inflates by the factor
$1+X$.  For the benchmark $V_1=(|x|^2-1)^2$ on $(-4,4)^2$,
$\norm{V_1}_{L^\infty}=961$ and $X\approx538$, so at $N=48$
\cref{thm:gap-CN} with the crude factor $g^2=\min(X^2,X)=538$ certifies
only $\mu_1\ge0.496$ against the reference $1.78506$ ($72\%$ gap).  This
crude factor is far from optimal, however: the computable tail-refined
form factor $\eta_V'$ of \cref{lem:opt-etaV}---which sees only the small
leakage of $V_1v_N$ out of the resolved space, not
$\norm{V_1}_{L^\infty}$---lowers $g^2$ from $538$ to $16.4$ and raises
the \emph{same} standard-matrix bound to $\mu_1\ge1.647$ ($7.7\%$ gap),
within a modest factor of the composite value obtained below
(\cref{tab:formfactor}).
The composite discretisation of the next subsection achieves the unmodified constant $\nu_*^{-1/2}$ rigorously.

\subsection{The composite discretisation: removing
\texorpdfstring{$\norm{V}_{L^\infty}$}{the V-infinity factor}}
\label{subsec:composite}

The construction transplants the architecture of the composite enriched
Crouzeix--Raviart (CECR) method (see \cite[\S4.1]{Liu2024book} and
\cite{LiuJSIAM2026}) to the spectral setting.  In CECR, the potential is
replaced by a piecewise-constant under-approximation and is sampled
through the cell-average projection, whose interplay with the
(average-preserving) ECR interpolation produces the required
orthogonalities.  Here the corresponding roles are played by a
\emph{bandlimited under-approximation} of $V$ and a \emph{weighted
spectral projection of reduced degree}; the average-preservation property
of the ECR interpolant is replaced by an exact aliasing identity
(\cref{lem:slaving}).  We work with Neumann conditions (the case needed
in \cref{sec:numerics}); the Dirichlet version is analogous
(\cref{rem:dirichlet-version}).

\smallskip
\textbf{Bandlimited under-approximation.}
For $J\in\N_0$ let $T_J:=\spn\{\varphi_{\bm j}:\,0\le j_i\le J\}$ denote
the space of cosine polynomials of coordinatewise degree at most $J$.
Cosines are the natural home for $V$ here: in the Neumann basis the
potential enters the Galerkin matrix only through its cosine moments,
since $\cos(i\pi x)\cos(j\pi x)=\tfrac12[\cos((i{-}j)\pi x)+
\cos((i{+}j)\pi x)]$ makes $(V\varphi_{\bm i},\varphi_{\bm j})$ a
combination of integrals $\int_\Omega V\varphi_{\bm n}$; and because
cosine${}\times{}$cosine${}={}$cosine, the slaving \cref{lem:slaving}
keeps $T_J\cdot V_m\subset\VN$.  Thus, unlike the Dirichlet case of
\cref{rem:dirichlet-version}, here \emph{both} the trial functions and
the potential are represented in the same cosine system.  Choose
\begin{equation}
\label{eq:W-def}
  W\in T_J,\qquad W\le V\ \text{a.e.\ in }\Omega,\qquad
  W\not\equiv\mathrm{const},
\end{equation}
and define the shift and the nonnegative weight
\begin{equation}
\label{eq:shift-def}
  s:=\max\bigl(0,\,-\operatorname*{ess\,inf}_\Omega W\bigr)\ge0,
  \qquad
  \widetilde W:=W+s\;\ge\;0 .
\end{equation}
A canonical choice of $W$ is the truncated cosine expansion of $V$ minus
a certified tail bound: $W=S_JV-\tau_J$ with
$\tau_J\ge\norm{V-S_JV}_{L^\infty(\Omega)}$, where $S_JV$ is the
degree-$J$ partial cosine sum; for the polynomial potentials used in
\cref{sec:numerics} the cosine coefficients are available in closed form
by repeated integration by parts, and $\tau_J$ is a computable
coefficient-tail sum.  This choice meets the two requirements of
\eqref{eq:W-def} directly: $W\in T_J$ because $S_JV\in T_J$ and $\tau_J$ is
a constant, and the under-approximation $W\le V$ holds pointwise a.e., since
\begin{equation}
\label{eq:W-under}
  V-W=(V-S_JV)+\tau_J\;\ge\;\tau_J-\norm{V-S_JV}_{L^\infty(\Omega)}\;\ge\;0
\end{equation}
by the defining property of $\tau_J$.  Since $V$ is smooth in
$\overline\Omega$, the
deficit $V-W\le 2\tau_J$ is small away from $\partial\Omega$ (the slow
endpoint convergence of the cosine expansion is confined to a boundary
layer, where the eigenfunctions of interest are exponentially small after
truncation); see \cref{rem:consistency}.

By construction and potential monotonicity (min--max),
\begin{equation}
\label{eq:W-monotonicity}
  \mu_k(V)\;\ge\;\mu_k(W)\;=\;\mu_k(\widetilde W)-s ,
\end{equation}
where $\mu_k(\cdot)$ denotes the $k$-th Neumann eigenvalue of
$-\Delta+{}\cdot{}$ on $\Omega$.  It therefore suffices to bound
$\mu_k(\widetilde W)$ from below, for the \emph{nonnegative, bandlimited}
weight $\widetilde W$.

\smallskip
\textbf{Reduced-degree weighted projection.}
Fix the reduced cutoff
\begin{equation}
\label{eq:m-def}
  m:=N-J\;\ge\;0,\qquad
  V_m:=\spn\{\varphi_{\bm j}:\,0\le j_i\le m\}\subset\VN ,
\end{equation}
and let $\Pi^{\widetilde W}_m:L^2(\Omega)\to V_m$ be the
$\widetilde W$-weighted projection,
\begin{equation}
\label{eq:weighted-projection}
  \bigl(\widetilde W(u-\Pi^{\widetilde W}_mu),\,\chi\bigr)=0
  \qquad\forall \chi\in V_m .
\end{equation}
Its Gram matrix $H$, $H_{\bm\alpha\bm\beta}
=(\widetilde W\varphi_{\bm\alpha},\varphi_{\bm\beta})$
($\bm\alpha,\bm\beta\le m$), is positive definite: if
$(\widetilde Wp,p)=0$ for $p\in V_m$ then $p=0$ on the open set
$\{\widetilde W>0\}$, which is nonempty by
\eqref{eq:W-def}--\eqref{eq:shift-def}, so $p\equiv0$ by analyticity of
trigonometric polynomials.  Hence $\Pi^{\widetilde W}_m$ is well defined
on $L^2(\Omega)$.

The decisive structural property is the following exact aliasing
identity.

\begin{lemma}[Slaving identity]
\label{lem:slaving}
$\Pi^{\widetilde W}_m\circ(I-\Pi_N^0)=0$ on $L^2(\Omega)$, where
$\Pi_N^0$ is the $L^2$-orthogonal cosine truncation onto $\VN$.
Consequently
$\Pi^{\widetilde W}_m=\Pi^{\widetilde W}_m\circ\Pi_N^0$.
\end{lemma}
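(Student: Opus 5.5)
The plan is to test the defining relation \eqref{eq:weighted-projection} against $V_m$ and use that $\widetilde W\chi$ lies in $\VN$ for every $\chi\in V_m$. Fix $f\in L^2(\Omega)$ and set $r:=(I-\Pi_N^0)f$. Then $r\perp_{L^2}\VN$. The goal is to show that $p:=\Pi^{\widetilde W}_m r\in V_m$ vanishes. By \eqref{eq:weighted-projection}, $p$ is characterised by
\[
  (\widetilde W p,\chi)=(\widetilde W r,\chi)=(r,\widetilde W\chi)\qquad\forall\chi\in V_m .
\]
The key point is that $\widetilde W\chi\in\VN$ for all $\chi\in V_m$. Granting this, the right-hand side vanishes because $r\perp\VN$. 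Taking $\chi=p$ then gives $(\widetilde Wp,p)=0$. Since the Gram matrix $H$ is positive definite, as established just before the lemma, it follows that $p=0$.

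\textbf{Main step: the product inclusion.} We must show $T_J\cdot V_m\subset\VN$, applied to $\widetilde W=W+s\in T_J$, since constants lie in $T_J$. By tensorisation it suffices to work one coordinate at a time. In the variable $y_i=\pi(x_i+L_i)/(2L_i)\in(0,\pi)$, the basis functions are $\cos(jy_i)$ and $\cos(ly_i)$, up to normalisation. The product-to-sum formula gives
\[
  \cos(jy)\cos(ly)=\tfrac12\bigl[\cos((j-l)y)+\cos((j+l)y)\bigr],
\]
where $\cos((j-l)y)=\cos(|j-l|y)$. With $j\le J$ and $l\le m=N-J$, both frequencies $|j-l|$ and $j+l$ are at most $N$. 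A product of tensor cosines factorises coordinatewise, so each product lies in $\spn\{\varphi_{\bm n}:n_i\le N\}=\VN$. The normalisation constants $c_0,c_m$ do not affect membership in the span. This is the only place where the choice $m=N-J$ enters, and it is what makes the identity exact rather than approximate: there is no aliasing beyond degree $N$.

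\textbf{Consequence.} Applying the first part to $f$ gives $\Pi^{\widetilde W}_m f=\Pi^{\widetilde W}_m\Pi_N^0 f+\Pi^{\widetilde W}_m(I-\Pi_N^0)f=\Pi^{\widetilde W}_m\Pi_N^0 f$. The remaining check is well-definedness: $\widetilde W\in L^\infty$, so $(\widetilde W r,\chi)$ makes sense for $r\in L^2$. The main obstacle is the product inclusion above, but it reduces to elementary trigonometry.
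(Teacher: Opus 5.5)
Your proposal is correct and follows essentially the same route as the paper: the product-to-sum formula with the degree budget $J+m=N$ gives $\widetilde W\chi\in\VN$ for all $\chi\in V_m$, so $(\widetilde W r,\chi)=(r,\widetilde W\chi)=0$ whenever $r\perp_{L^2}\VN$. You also spell out two points the paper leaves implicit: the coordinatewise frequency check $|j-l|,\,j+l\le N$, and the use of the positive definiteness of $H$ to pass from the vanishing right-hand side to $\Pi^{\widetilde W}_m r=0$.
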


\begin{proof}
Let $r:=(I-\Pi_N^0)u$ and $\chi\in V_m$.  By the product-to-sum formulas,
the product of a cosine mode of coordinatewise degree $\le J$ with one of
degree $\le m$ is a combination of modes of degree $\le J+m=N$; hence
$\widetilde W\chi\in T_J\cdot V_m\subset\VN$.  Since $r\perp_{L^2}\VN$,
$(\widetilde Wr,\chi)=(r,\widetilde W\chi)=0$ for all $\chi\in V_m$, so
$\Pi^{\widetilde W}_mr=0$.
\end{proof}

\smallskip
The consequence $\Pi^{\widetilde W}_m=\Pi^{\widetilde W}_m\circ\Pi_N^0$ is
the property on which the lower bound of \cref{thm:schrodinger-main} rests:
it says that the weighted projection sees $u$ only through its resolved
part $\Pi_N^0u$, so the second component $\Pi^{\widetilde W}_mv_N$ of a
composite trial function is determined by the first component
$v_N\in\VN$ alone.  In the proof of \cref{thm:schrodinger-main} this is
exactly what renders the interpolation residual $\widehat a$-orthogonal to
$\widehat U_N$, and hence yields the Pythagoras identity that produces the
certified gap.  The identity holds solely because of the containment
$T_J\cdot V_m\subset\VN$, equivalently the degree budget $J+m\le N$ of
\eqref{eq:m-def}: the product of the bandlimited weight $\widetilde W$
(degree $\le J$) with a degree-$m$ element of $V_m$ must remain in the
degree-$N$ space $\VN$.  This is the structural reason the weight $W$ is
required to be bandlimited in \eqref{eq:W-def} ($W\in T_J$) rather than an
arbitrary under-approximation of $V$: the bandwidth $J$ and the reduced
degree $m=N-J$ are not free parameters but are locked together by this
single containment.

\smallskip
\textbf{The composite discrete problem.}
Define the composite trial space of \emph{slaved pairs},
\begin{equation}
\label{eq:composite-space}
  \widehat U_N:=\bigl\{(v_N,\;\Pi^{\widetilde W}_mv_N):\,v_N\in\VN\bigr\}
  \;\subset\;\widehat V:=\Hone(\Omega)\times L^2(\Omega),
\end{equation}
with the forms, for $\widehat u=(u_1,u_2)$, $\widehat v=(v_1,v_2)$,
\begin{equation}
\label{eq:composite-forms}
  \widehat a(\widehat u,\widehat v)
  :=(\nabla u_1,\nabla v_1)+(\widetilde Wu_2,v_2),
  \qquad
  \widehat b(\widehat u,\widehat v):=(u_1,v_1) .
\end{equation}
The composite discrete eigenvalue problem is the variational one posed on
$\widehat U_N$: find $(\widehat\lambda_{k,N},\widehat u_N)\in\mathbb R\times
\widehat U_N$ with $\widehat u_N\neq0$ such that
\begin{equation}
\label{eq:composite-evp}
  \widehat a(\widehat u_N,\widehat v)
  =\widehat\lambda_{k,N}\,\widehat b(\widehat u_N,\widehat v)
  \qquad\forall\,\widehat v\in\widehat U_N .
\end{equation}
This is the Galerkin restriction to $\widehat U_N$ of the abstract eigenvalue
problem for $(\widehat V,\widehat a,\widehat b)$, so the lower-bound
machinery of \cref{thm:liu} applies to it verbatim; the matrix problem below
is merely its realization in a basis.  Expanding
$\widehat u_N=(v_N,\Pi^{\widetilde W}_mv_N)$ with
$v_N=\sum_{\bm i\in\Lambda_N}c_{\bm i}\varphi_{\bm i}$ in the orthonormal
cosine basis $\{\varphi_{\bm i}\}_{\bm i\in\Lambda_N}$ of the first
component, \eqref{eq:composite-evp} becomes the algebraic eigenvalue problem
\begin{equation}
\label{eq:composite-matrix}
  \widehat A\,\mathbf c=\widehat\lambda_{k,N}\,\mathbf c,
  \qquad
  \widehat A:=K+GH^{-1}G^{\mathsf T},
\end{equation}
where $K=\operatorname{diag}(\nu_{\bm i})$ is the kinetic matrix,
$G_{\bm i\bm\alpha}=(\widetilde W\varphi_{\bm i},\varphi_{\bm\alpha})$
($\bm i\in\Lambda_N$, $\bm\alpha\le m$), and $H$ is the weighted Gram
matrix above; the identity
$(\widetilde W\Pi^{\widetilde W}_m\varphi_{\bm i},
  \Pi^{\widetilde W}_m\varphi_{\bm j})
 =(\widetilde W\varphi_{\bm i},\Pi^{\widetilde W}_m\varphi_{\bm j})
 =(GH^{-1}G^{\mathsf T})_{\bm i\bm j}$
follows from \eqref{eq:weighted-projection}.  The mass matrix is the
identity.  Because $\widetilde W$ is a cosine polynomial, \emph{every
entry of $K$, $G$, $H$ is a finite closed-form expression}: assembly is
exact, quadrature-free, and directly amenable to interval arithmetic.

To interpret $GH^{-1}G^{\mathsf T}$, introduce the $\widetilde W$-weighted
seminorm $\norm{u}_{\widetilde W}^2:=(\widetilde Wu,u)\ge0$ and write $P_{\widetilde W}$
for the \emph{plain} weight matrix
$(P_{\widetilde W})_{\bm i\bm j}:=(\widetilde W\varphi_{\bm i},
\varphi_{\bm j})$, i.e.\ the Galerkin matrix of the multiplication operator
$\widetilde W$ on $\VN$; the eigenvalues of the corresponding standard
matrix $K+P_{\widetilde W}$ are denoted
$\lambda_{k,N}^{\mathrm{plain}}(\widetilde W)$ and are what one would obtain
\emph{without} the auxiliary space $V_m$.  The composite matrix replaces
$P_{\widetilde W}$ by $GH^{-1}G^{\mathsf T}$, which by the identity above is
the Gram matrix of the \emph{compressed} weight $\widetilde
W\Pi^{\widetilde W}_m$: for $u=\sum_{\bm i}c_{\bm i}\varphi_{\bm i}\in\VN$,
$$
  \mathbf c^{\mathsf T}GH^{-1}G^{\mathsf T}\mathbf c
  =\bigl(\widetilde W\,\Pi^{\widetilde W}_mu,\;\Pi^{\widetilde W}_mu\bigr)
  =\norm{\Pi^{\widetilde W}_mu}_{\widetilde W}^2
  \;\le\;\norm{u}_{\widetilde W}^2
  =(\widetilde Wu,u)=\mathbf c^{\mathsf T}P_{\widetilde W}\,\mathbf c ,
$$
the inequality holding because $\Pi^{\widetilde W}_m$ is the orthogonal
projection onto $V_m$ in the $\widetilde W$-semi-inner product and is
therefore nonexpansive in $\norm{\cdot}_{\widetilde W}$.  This is the meaning of
\emph{projection compression}: passing the weight through the
finite-dimensional space $V_m$ discards the component of $\widetilde Wu$
that $V_m$ cannot represent, so the resulting quadratic form can only
shrink.  Hence $GH^{-1}G^{\mathsf T}\preceq P_{\widetilde W}$ and, by
min--max, $\widehat\lambda_{k,N}\le\lambda_{k,N}^{\mathrm{plain}}
(\widetilde W)$; as $m,J\to\infty$ suitably, $\widehat\lambda_{k,N}\to
\mu_k(\widetilde W)$ with spectral accuracy for smooth $V$.

\begin{theorem}[Spectral lower bounds free of
$\norm{V}_{L^\infty}$]
\label{thm:schrodinger-main}
Let \eqref{eq:V-assumption} hold, let $W,s,\widetilde W,m$ be as in
\eqref{eq:W-def}--\eqref{eq:m-def}, let
$\widehat\lambda_{1,N}\le\cdots\le\widehat\lambda_{M,N}$
be the eigenvalues of the composite problem
\eqref{eq:composite-evp}, and let $\nu_*$ be the first omitted Neumann
Laplacian eigenvalue \eqref{eq:VN-cos}.  Then the Neumann eigenvalues
$\mu_k$ of \eqref{eq:schro-evp} satisfy
\begin{equation}
\label{eq:schro-main-bound}
  \mu_k\;\ge\;
  \frac{\widehat\lambda_{k,N}}{1+\widehat\lambda_{k,N}/\nu_*}\;-\;s,
  \qquad k=1,\ldots,M .
\end{equation}
\end{theorem}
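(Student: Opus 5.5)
The plan is to apply \cref{thm:liu} to the composite setting $\widehat V=\Hone(\Omega)\times L^2(\Omega)$ with the forms \eqref{eq:composite-forms}, but for the \emph{bandlimited} operator $-\Delta+\widetilde W$. The resulting bound on $\mu_k(\widetilde W)$ is then transferred to $\mu_k(V)$ via the monotonicity \eqref{eq:W-monotonicity}.

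\textbf{Verifying \cref{ass:abstract}.}

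For \ref{a1}, let $(\mu_j(\widetilde W),u_j)$ be the Neumann eigenpairs of $-\Delta+\widetilde W$ on $\Omega$, normalised in $L^2$. Embed them diagonally as $\widehat u_j:=(u_j,u_j)\in\widehat V$. Then $\widehat b(\widehat u_i,\widehat u_j)=(u_i,u_j)=\delta_{ij}$ and $\widehat a(\widehat u_i,\widehat u_j)=(\nabla u_i,\nabla u_j)+(\widetilde Wu_i,u_j)=\mu_j(\widetilde W)\delta_{ij}$, which is exactly \eqref{eq:exact-evp}. Positivity $\mu_1(\widetilde W)>0$ follows because a zero of the Rayleigh quotient forces $u$ to be constant with $(\widetilde W u,u)=0$. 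That is impossible, since $\widetilde W\ge0$ is a nonconstant trigonometric polynomial and hence positive on an open set.

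For \ref{a2}, take $\widehat V_N=\widehat U_N$. Here $\widehat b$ is definite because the first component determines the pair. For $\widehat a$: if $\widehat a(\widehat v,\widehat v)=0$ then $v_N\equiv c$ is constant. Constants lie in $V_m$, so $\Pi^{\widetilde W}_m c=c$, and then $(\widetilde Wc,c)=0$ forces $c=0$.

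\textbf{The interpolation map.} For \ref{a3}, define on $E_k$ (indeed on all diagonal pairs)
\[
  \Pi_N(\phi,\phi):=\bigl(\PNo\phi,\;\Pi^{\widetilde W}_m\phi\bigr).
\]
By the slaving identity (\cref{lem:slaving}), $\Pi^{\widetilde W}_m\phi=\Pi^{\widetilde W}_m\PNo\phi$, so $\Pi_N(\phi,\phi)=(v_N,\Pi^{\widetilde W}_mv_N)$ with $v_N=\PNo\phi$. Hence the image lies in $\widehat U_N$. This is the one genuinely structural step, and it is where $J+m\le N$ is used.

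The Pythagoras property \eqref{eq:hyp-pythagoras} splits componentwise:
\[
  (\nabla\PNo\phi,\nabla(\phi-\PNo\phi))+\bigl(\widetilde W\Pi^{\widetilde W}_m\phi,\;\phi-\Pi^{\widetilde W}_m\phi\bigr).
\]
The first term vanishes by the $a_0$-orthogonality of the modal projection. The second vanishes by \eqref{eq:weighted-projection}, because $\Pi^{\widetilde W}_m\phi\in V_m$.

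For the estimate \eqref{eq:hyp-estimate}, observe that $\widehat b$ sees only the first component. Using the modal tail bound \eqref{eq:modal-tail} and dropping the nonnegative weighted term,
\[
  \snorm{\phi-\PNo\phi}{L^2}^2\le\nu_*^{-1}\snorm{\nabla(\phi-\PNo\phi)}{L^2}^2\le\nu_*^{-1}\,\widehat a(\widehat\phi-\Pi_N\widehat\phi,\widehat\phi-\Pi_N\widehat\phi),
\]
so $C_N=\nu_*^{-1/2}$.

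\textbf{Conclusion.} \Cref{thm:liu} now gives
\[
  \mu_k(\widetilde W)\ge\frac{\widehat\lambda_{k,N}}{1+\widehat\lambda_{k,N}/\nu_*}.
\]
By \eqref{eq:W-monotonicity}, $\mu_k(V)\ge\mu_k(\widetilde W)-s$, which yields \eqref{eq:schro-main-bound}.

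\textbf{Expected obstacles.} The main point needing care is that $\Pi_N$ maps into $\widehat U_N$ and not merely into $\VN\times V_m$. Without the slaving identity, the second component would not be the slave of the first, and the construction would fail. A secondary point is the definiteness checks in \ref{a1}--\ref{a2}, which rely on the nonconstancy of $W$ and on the fact that constants belong to $V_m$.
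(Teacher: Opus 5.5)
Your proposal is correct and follows the paper's proof essentially step for step. The paper uses the same ingredients: the reduction to $\mu_k(\widetilde W)$ via \eqref{eq:W-monotonicity}, the diagonal embedding $(u_j,u_j)$ for \ref{a1}, the definiteness checks using that constants lie in $V_m$, the interpolation $(\Pi_N^0u,\Pi^{\widetilde W}_mu)$ placed in $\widehat U_N$ by \cref{lem:slaving}, the componentwise Pythagoras identity, and the modal-tail estimate giving $C_N=\nu_*^{-1/2}$.
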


\begin{proof}
By \eqref{eq:W-monotonicity} it suffices to prove
$\mu_k(\widetilde W)\ge\widehat\lambda_{k,N}/
(1+\widehat\lambda_{k,N}/\nu_*)$.  We verify \cref{ass:abstract} for
$(\widehat V,\widehat a,\widehat b)$ of \eqref{eq:composite-forms} with
discrete space $\widehat U_N$ of \eqref{eq:composite-space}.

\emph{\ref{a1}.}  Let $(\mu_k(\widetilde W),u_k)$ be the Neumann
eigenpairs of $-\Delta+\widetilde W$ on $\Omega$, $L^2$-orthonormalised.
The diagonal vectors $\widehat u_k:=(u_k,u_k)$ satisfy
$\widehat b(\widehat u_i,\widehat u_j)=\delta_{ij}$ and
\[
  \widehat a(\widehat u_i,\widehat u_j)
  =(\nabla u_i,\nabla u_j)+(\widetilde Wu_i,u_j)
  =\mu_j(\widetilde W)\,\delta_{ij},
\]
which is all that the proof of
\cref{thm:liu} uses on the exact side.  Moreover
$\mu_1(\widetilde W)>0$: the Rayleigh quotient
$(\norm{\nabla u}^2+(\widetilde Wu,u))/\norm{u}^2$ vanishes only if $u$
is constant and $\int_\Omega\widetilde W\,u^2=0$, impossible for
$u\ne0$ since $\widetilde W\ge0$, $\widetilde W\not\equiv0$.

\emph{\ref{a2}.}  $\widehat b$ restricted to $\widehat U_N$ is the
$L^2$ inner product of the first component, positive definite;
$\widehat a$ is positive definite on $\widehat U_N$ by the same
argument as for $\mu_1(\widetilde W)>0$ (note
$\Pi^{\widetilde W}_mc=c$ for constants $c$, as constants lie in
$V_m$).  The discrete eigenvalues are exactly
$\widehat\lambda_{k,N}$ of \eqref{eq:composite-evp}, equivalently of the
matrix problem \eqref{eq:composite-matrix}.

\emph{\ref{a3}.}  Define, on diagonal vectors,
\begin{equation}
\label{eq:composite-interpolation}
  \Pi_N(u,u):=\bigl(\Pi_N^0u,\;\Pi^{\widetilde W}_mu\bigr).
\end{equation}
By \cref{lem:slaving},
$\Pi^{\widetilde W}_mu=\Pi^{\widetilde W}_m(\Pi_N^0u)$, so
$\Pi_N(u,u)\in\widehat U_N$: the interpolant is a \emph{slaved pair},
as required.  Write
$\widehat w:=(u,u)-\Pi_N(u,u)
=\bigl(u-\Pi_N^0u,\;u-\Pi^{\widetilde W}_mu\bigr)$.

\emph{Pythagoras \eqref{eq:hyp-pythagoras}.}
Using the $L^2$- and gradient-orthogonality of the cosine truncation in
the first slot and the weighted orthogonality
\eqref{eq:weighted-projection} (with test function
$\Pi^{\widetilde W}_mu\in V_m$) in the second slot,
\[
  \widehat a\bigl(\Pi_N(u,u),\widehat w\bigr)
  =(\nabla\Pi_N^0u,\,\nabla(u-\Pi_N^0u))
  +(\widetilde W\,\Pi^{\widetilde W}_mu,\;u-\Pi^{\widetilde W}_mu)
  =0+0 .
\]

\emph{Projection-error estimate \eqref{eq:hyp-estimate}.}
Since $u-\Pi_N^0u$ contains only cosine modes outside $\Lambda_N$,
Parseval gives
$\norm{u-\Pi_N^0u}_{L^2}^2\le\nu_*^{-1}
\norm{\nabla(u-\Pi_N^0u)}_{L^2}^2$, and therefore, using
$\widetilde W\ge0$,
\[
  \snorm{\widehat w}{\widehat b}^2
  =\norm{u-\Pi_N^0u}_{L^2}^2
  \le\frac1{\nu_*}\Bigl(\norm{\nabla(u-\Pi_N^0u)}^2
  +(\widetilde W(u-\Pi^{\widetilde W}_mu),\,u-\Pi^{\widetilde W}_mu)
  \Bigr)
  =\frac1{\nu_*}\,\snorm{\widehat w}{\widehat a}^2 .
\]
Thus $C_N=\nu_*^{-1/2}$ is admissible, and \cref{thm:liu} yields
$\mu_k(\widetilde W)\ge\widehat\lambda_{k,N}/
(1+\widehat\lambda_{k,N}/\nu_*)$.
\end{proof}

\begin{remark}[Structural analogy with CECR]
\label{rem:cecr-analogy}
The triple (bandlimited under-approximation $W$, reduced-degree weighted
projection $\Pi^{\widetilde W}_m$, cosine truncation $\Pi_N^0$) is the
exact spectral counterpart of the CECR triple (piecewise-constant
under-approximation $\overline V$, cell-average projection $\Pi_{0,h}$,
ECR interpolation $\Pi_h^{\mathrm{ECR}}$) of
\cite[\S4.1]{Liu2024book}: in both cases the second-component projection
annihilates the first-component interpolation residual---there by zero
cell averages, here by the aliasing bound $J+m\le N$
(\cref{lem:slaving})---and this single identity generates both the
$\widehat a$-orthogonality and the Pythagoras property.  The constant,
however, improves from a mesh-dependent interpolation constant
$C_h\le0.1490\,h_{\max}$ \cite{XieXieLiu2018} to the closed-form
spectral-gap value $\nu_*^{-1/2}$, which is sharp already for $V=0$
(\cref{lem:spectral-CN}).
\end{remark}

\begin{remark}[Parameter choice, cost, and consistency]
\label{rem:consistency}
In practice $J=m=\lceil N/2\rceil$ balances the two error sources.  The
certified gap in \eqref{eq:schro-main-bound} splits into three parts:
(i) the projection gap $\approx\mu_k^2/\nu_*=O(N^{-2})$, with constant
\emph{independent of $V$}; (ii) the discretisation gap
$\widehat\lambda_{k,N}\to\mu_k(\widetilde W)$, spectrally small for
smooth $V$; (iii) the consistency deficit
$\mu_k(V)-\mu_k(W)\le\langle(V-W)u_k,u_k\rangle/\norm{u_k}^2$, which is
governed by $V-W$ \emph{weighted by the eigenfunction}.  For confining
potentials after truncation, $u_k$ is exponentially small precisely in
the boundary layer where the one-sided cosine approximation converges
slowly, so (iii) is dominated by the interior approximation error of
$S_JV$, which decays rapidly.  The total certified gap is therefore
$O(N^{-2})$ with a moderate constant; this is confirmed by the
numerical results of \cref{sec:numerics}.  The eigensolve
\eqref{eq:composite-matrix} is a dense symmetric problem of dimension
$(N+1)^d$; for large $N$, matrix-vector products with $K$, $G$, and
$G^{\mathsf T}$ are applied in $O(M\log M)$ operations by fast cosine
transforms ($G$, $H$ are discrete convolution, Toeplitz-plus-Hankel,
operators in each coordinate), and the inner solve with $H$ reuses a
cached factorisation, so a Lanczos eigensolver scales to large $M$.
\end{remark}

\begin{remark}[Dirichlet version]
\label{rem:dirichlet-version}
All statements hold verbatim for the Dirichlet problem on $\Omega$ with
the sine eigenbasis of the Dirichlet Laplacian, $\nu_*$ the first
omitted Dirichlet Laplacian eigenvalue, and $V_m$, $T_J$ the
corresponding sine/cosine spaces with the degree bookkeeping
$J+m\le N$ (a product of a cosine polynomial of degree $\le J$ and a
sine polynomial of degree $\le m$ is a sine polynomial of degree
$\le N$).  One subtlety is specific to the Dirichlet case: the
bandlimited weight $W=S_JV-\tau_J$ is built from the \emph{cosine} space
$T_J$, whose partial sum resolves $V$ only at the slow $O(J^{-1})$ rate
in a boundary layer when $\partial_nV\ne0$.  This boundary mismatch
degrades the \emph{approximation} of $V$ but, through the linear
vanishing of the Dirichlet eigenfunctions, scarcely degrades the
\emph{bound} (the composite bound keeps its $O(N^{-2})$ rate); where it
does bite---genuine Neumann problems with $O(1)$ boundary traces---it is
removed by \emph{boundary lifting}, an exact split $V=B+V_0$ that matches
the normal-derivative trace with a low-degree corrector $B$.  This
analysis, with a worked one-dimensional example, is an interesting topic
in its own right but lies beyond the scope and page limit of the present
paper; it will be discussed in full in a subsequent paper.
\end{remark}

\begin{remark}[Choosing between the two mechanisms]
\label{rem:crossover}
\Cref{thm:gap-CN} (standard matrix) and \cref{thm:schrodinger-main}
(composite matrix) certify the same eigenvalues; their relative gaps
scale as $(1+\min(X^2,X))\,\mu_k/\nu_*$ and $\mu_k/\nu_*$ respectively,
at essentially the same cost.  The standard-matrix route is preferable
for $X=\norm{V}_{L^\infty}/\lambda_{1,N}\lesssim1$ (simplicity, no
under-approximation, no shift); the composite route wins by the factor
$\approx1+X$ otherwise---decisively so for truncated confining
potentials, where $X=10^2$--$10^3$ (\cref{subsec:gap-limitation}).
\end{remark}

\subsection{Domain truncation: two-sided bounds on \texorpdfstring{$\R^d$}{Rd}}
\label{subsec:truncation}

Consider now the Schr\"odinger operator on the whole space $\R^d$,
\begin{equation}
\label{eq:full-space-evp}
\begin{aligned}
  &H=-\Delta+V,\quad 
V\ \text{continuous},\quad V\ge0,\quad V(x)\to\infty\ \ (|x|\to\infty),
\end{aligned}
\end{equation}
with form domain
$\mathcal Q=\{u\in\Hone(\R^d):\int V|u|^2<\infty\}$; the confinement
guarantees compact resolvent and discrete spectrum
$\lambda_1\le\lambda_2\le\cdots\to\infty$ \cite{ReedSimon1978}.  Fix a bounded truncation domain $\Omega\subset\R^d$ and define the exterior potential minimum
\begin{equation}
\label{eq:sigma-def}
  \sigma(\Omega):=\inf_{x\in\R^d\setminus\Omega}V(x).
\end{equation}

\begin{lemma}[Neumann truncation lower bound
  {\cite[Lem.~1]{LiuJSIAM2026}}]
\label{lem:neumann-truncation}
Let $\mu_k$ denote the $k$-th Neumann eigenvalue of $-\Delta+V$ on
$\Omega$.  If the confinement condition $\sigma(\Omega)>\lambda_k$
holds, then $\mu_k\le\lambda_k$.
\end{lemma}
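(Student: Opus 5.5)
The plan is to use the min--max principle, restricting the first $k$ whole-space eigenfunctions to $\Omega$ as Neumann trial functions. Let $\psi_1,\dots,\psi_k\in\mathcal Q$ be $L^2(\R^d)$-orthonormal eigenfunctions of $H$ with eigenvalues $\lambda_1\le\dots\le\lambda_k$. Set $E_k=\spn\{\psi_1,\dots,\psi_k\}$. For $\psi\in E_k$ we have $\int_{\R^d}(|\nabla\psi|^2+V|\psi|^2)\le\lambda_k\norm{\psi}^2_{L^2(\R^d)}$.

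We split every such integral into its parts over $\Omega$ and over $\Omega^c:=\R^d\setminus\Omega$. On $\Omega^c$ we have $V\ge\sigma(\Omega)$, so
\[
  \int_{\Omega}\bigl(|\nabla\psi|^2+V|\psi|^2\bigr)+\sigma(\Omega)\int_{\Omega^c}|\psi|^2
  \le\lambda_k\Bigl(\int_\Omega|\psi|^2+\int_{\Omega^c}|\psi|^2\Bigr).
\]
Rearranging gives
\[
  \int_\Omega\bigl(|\nabla\psi|^2+V|\psi|^2\bigr)-\lambda_k\int_\Omega|\psi|^2\le(\lambda_k-\sigma(\Omega))\int_{\Omega^c}|\psi|^2\le0.
\]
Here we use the confinement hypothesis $\sigma(\Omega)>\lambda_k$ together with $\int_{\Omega^c}|\psi|^2\ge0$.

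Next we pass to the restrictions. The restriction map $\psi\mapsto\psi|_\Omega$ sends $E_k$ into $\Hone(\Omega)$, which is the Neumann form domain. Since $V$ is continuous and $\Omega$ is bounded, $V\in L^\infty(\Omega)$. For the min--max characterisation of $\mu_k$ we need the restricted space to be $k$-dimensional, i.e.\ the restriction must be injective on $E_k$. We argue by contradiction. Suppose $\psi\ne0$ in $E_k$ had $\psi|_\Omega=0$. Then the left side of the displayed inequality would vanish, forcing $(\lambda_k-\sigma(\Omega))\int_{\Omega^c}|\psi|^2\ge0$. Because the factor $\lambda_k-\sigma(\Omega)$ is strictly negative, this gives $\int_{\Omega^c}|\psi|^2=0$, hence $\psi=0$, a contradiction. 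Strictness of the confinement condition is used exactly here, and this is the only delicate step; no unique-continuation argument is needed.

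With injectivity in hand, $S:=\{\psi|_\Omega:\psi\in E_k\}$ is a $k$-dimensional subspace of $\Hone(\Omega)$. Every nonzero element of $S$ has Neumann Rayleigh quotient at most $\lambda_k$, by the inequality above. Min--max then yields
\[
  \mu_k\le\max_{0\ne v\in S}\frac{a(v,v)}{\norm{v}_{L^2(\Omega)}^2}\le\lambda_k,
\]
which proves \cref{lem:neumann-truncation}.
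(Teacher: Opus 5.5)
Your proof is correct and follows the paper's own (sketched) argument: you restrict the first $k$ eigenfunctions of $H$ to $\Omega$, use $V\ge\sigma(\Omega)$ on $\R^d\setminus\Omega$ to bound every Neumann Rayleigh quotient on the restricted space by $\lambda_k$, use the strict inequality $\sigma(\Omega)>\lambda_k$ to get injectivity of the restriction, and conclude by min--max. You have supplied the details (the splitting inequality and the injectivity-by-contradiction step) that the paper leaves to its reference.
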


\begin{proof}[Proof (sketch)]
The restrictions to $\Omega$ of the first $k$ eigenfunctions of $H$ span
a $k$-dimensional subspace of $\Hone(\Omega)$ (injectivity of the
restriction uses $\sigma(\Omega)>\lambda_k$), and on this subspace every
Neumann Rayleigh quotient is $\le\lambda_k$, since removing the exterior
energy subtracts at least $\sigma(\Omega)$ per unit of exterior mass.
The min--max principle concludes; see \cite{LiuJSIAM2026} for the
complete argument, and \cite[\S10]{NakaoPlumWatanabe2019} for a related
condition in the homotopy method.
\end{proof}

Combining \cref{lem:neumann-truncation} (with the a~posteriori check
$\sigma(\Omega)>\overline\lambda_k$ for a computed upper bound
$\overline\lambda_k$), \cref{thm:gap-CN} or \cref{thm:schrodinger-main}
on $\Omega$, and the Dirichlet--Rayleigh--Ritz upper bound, one obtains
the fully computable two-sided enclosure
\begin{equation}
\label{eq:bound-chain}
  \underbrace{\frac{\widehat\lambda_{k,N}}
    {1+\widehat\lambda_{k,N}/\nu_*}-s}_{=:\ \lbd_k
    \ \text{(\cref{thm:schrodinger-main})}}
  \;\le\;\mu_k\;\le\;\lambda_k
  \;\le\;\lambda_k^{D}
  \;\le\;
  \underbrace{\lambda_{k,N}^{D}}_{\text{sine-Galerkin (Dirichlet)}} ,
\end{equation}
where $\lambda_k^D$ is the $k$-th Dirichlet eigenvalue on $\Omega$ and
$\lambda^D_{k,N}$ its conforming sine-Galerkin upper bound.  Both ends
of \eqref{eq:bound-chain} come from one dense symmetric eigensolve each,
with exactly assembled matrices.  The truncation gap
$\lambda_k^D-\mu_k$ is exponentially small in the truncation radius by
Agmon estimates \cite{Agmon1982}.

\begin{remark}[Application to Coulomb potentials; companion paper]
\label{rem:coulomb-companion}
The auxiliary-projector mechanism behind \cref{thm:gap-CN} requires the
potential only through the form factor $\eta_V$ of \eqref{eq:etaV}, not
through $\norm{V}_{L^\infty}$ itself.  This opens the door to
\emph{singular} potentials with $\norm{V}_{L^\infty}=\infty$.  The
prototype is the attractive Coulomb potential
$V=-\sum_i Z_i/|x-a_i|$ of quantum chemistry: after a fixed positive
shift $\sigma$ rendering the form positive definite, a localised
three-dimensional Hardy inequality bounds $\int_\Omega|v|^2/|x-a_i|^2$ by
the Dirichlet energy plus an $L^2$ term, so $\eta_V$ stays \emph{finite}
even though the potential is unbounded.  The projection-gap constant of
\cref{thm:gap-CN} then applies verbatim on the shifted matrix, and the
certified gap closes at the full rate $O(\nu_*^{-1})=O(N^{-2})$ with no
$\norm{V}_{L^\infty}$-floor.  The three-dimensional character is
essential---no inverse-square Hardy inequality holds in two dimensions.
The full development, including the verified singular-integral assembly
and certified lower bounds for the hydrogen atom and the
$\mathrm{H}_2^+$ molecular ion, is deferred to a companion paper on
guaranteed eigenvalue bounds for Coulombic Schr\"odinger operators.
\end{remark}

\section{Numerical results}
\label{sec:numerics}

We first isolate, on a one-dimensional Dirichlet problem with a moderate
bounded potential, the head-to-head behaviour of the two mechanisms of
\cref{sec:schrodinger}; we then turn to the two-dimensional confining
benchmarks, where the potential is large after truncation and the
composite method becomes essential.

\subsection{A one-dimensional illustration: comparing the two
mechanisms}
\label{subsec:1d-illustration}

Consider the Dirichlet eigenvalue problem
\begin{equation}
\label{eq:1d-sin2-evp}
  -u''+V(x)\,u=\mu\,u\ \text{ on }(0,1),\qquad u(0)=u(1)=0,
  \qquad V(x)=\sin^2(\pi x),
\end{equation}
with the bounded nonnegative potential
$V=\tfrac12-\tfrac12\cos(2\pi x)$, $0\le V\le1$,
$\norm{V}_{L^\infty}=1$.  The Dirichlet Laplacian eigenpairs are
$\varphi_j=\sqrt2\sin(j\pi x)$, $\nu_j=j^2\pi^2$, and the trial space is
$\VN=\spn\{\varphi_j:1\le j\le N\}$ with first omitted eigenvalue
$\nu_*=(N+1)^2\pi^2$.  Because $V$ is a trigonometric polynomial, the
potential matrix
$P_{jk}=\int_0^1V\varphi_j\varphi_k\dx$ is exact in closed form
(here $P$ has diagonal $(\tfrac34,\tfrac12,\tfrac12,\dots)$ and the
entries $-\tfrac14$ on the second off-diagonals $|j-k|=2$), so both
discretisations assemble with no quadrature error and pass directly to
interval arithmetic.

This problem sits in the \emph{moderate} regime of \cref{rem:crossover}:
$X=\norm{V}_{L^\infty}/\lambda_{1,N}\approx1/10.62\approx0.094\ll1$, so
$\min(X^2,X)=X^2$ is tiny and the auxiliary-projector constant
$C_N^{(V)}=\sqrt{(1+X^2)/\nu_*}$ of \cref{thm:gap-CN} exceeds the sharp
$\nu_*^{-1/2}$ by the negligible factor $\sqrt{1+X^2}\approx1+X^2/2$.
The composite construction of \cref{thm:schrodinger-main} is here
exceptionally clean: $V$ is \emph{itself} a degree-$2$ cosine polynomial,
so the bandlimited under-approximation is exact, $W=V\in T_J$ with $J=2$,
the shift is $s=0$, and with $m=N-2$ the composite matrix
$\widehat A=K+GH^{-1}G^{\mathsf T}$ carries no consistency deficit
whatsoever---it isolates the constant $\nu_*^{-1/2}$ in its purest
form.  Both bounds are therefore expected to nearly coincide, with the
composite one marginally tighter.

\Cref{tab:1d-sin2} confirms this.  All entries are verified: the
eigenvalue enclosures of $K+P$ and of $\widehat A$ are computed with
\textsc{veigs} \cite{veigs} on interval matrices, and the reported lower bounds are
the interval infima of the bound formulae.  The Rayleigh--Ritz upper
bound $\lambda_{k,N}$ already agrees with the reference $\mu_k$ to all
displayed digits even at $N=16$ (spectral convergence with the potential
resolved exactly), so the \emph{entire} certified gap is the
constant-driven term $\lambda_{k,N}/\nu_*$ of \eqref{eq:liu-bound}, not
any approximation of $V$ or of the spectrum.  The two methods agree to
three to four digits; the auxiliary-projector gap exceeds the composite
gap only in the fourth significant figure (e.g.\ $0.0996\%$ versus
$0.0987\%$ for $k=1$, $N=32$, a ratio $1+X^2=1.009$, exactly as
predicted).  The gap closes as $O(N^{-2})$, in agreement with
\eqref{eq:relative-gap}.

\begin{table}[htbp]
\centering
\caption{Verified two-sided bounds for the 1D Dirichlet problem
  \eqref{eq:1d-sin2-evp}, $V=\sin^2(\pi x)$ on $(0,1)$,
  $\nu_*=(N+1)^2\pi^2$, $X\approx0.094$.  ``aux'' is the
  auxiliary-projector bound (\cref{thm:gap-CN}, standard matrix $K+P$);
  ``comp'' is the composite bound (\cref{thm:schrodinger-main} with $W=V$, $J=2$, $s=0$).  $\mu_k$ is the $N=400$ reference;
  the Rayleigh--Ritz upper bound equals it to all shown digits.
  Gap${}=(\mu_k-\lbd_k^{\,\mathrm{aux}})/\mu_k$.}
\label{tab:1d-sin2}
\renewcommand{\arraystretch}{1.15}
\setlength{\tabcolsep}{5pt}
\begin{tabular}{@{}c c ccc ccc@{}}
\toprule
 & & \multicolumn{3}{c}{$N=16$} & \multicolumn{3}{c}{$N=32$}\\
\cmidrule(lr){3-5}\cmidrule(lr){6-8}
$k$ & $\mu_k$
 & $\lbd_k^{\,\mathrm{aux}}$ & $\lbd_k^{\,\mathrm{comp}}$ & Gap\,\%
 & $\lbd_k^{\,\mathrm{aux}}$ & $\lbd_k^{\,\mathrm{comp}}$ & Gap\,\% \\
\midrule
1 & 10.61881 & 10.57908 & 10.57942 & 0.37
            & 10.60824 & 10.60833 & 0.10\\
2 & 39.97789 & 39.42047 & 39.42531 & 1.39
            & 39.82843 & 39.82974 & 0.37\\
3 & 89.32684 & 86.59099 & 86.61431 & 3.06
            & 88.58408 & 88.59056 & 0.83\\
\bottomrule
\end{tabular}
\end{table}

The lesson is the converse of the two-dimensional results below.  When
$\norm{V}_{L^\infty}$ is moderate ($X\lesssim1$), the
auxiliary-projector method---with no under-approximation, no auxiliary weight $H$, and no
shift---matches the composite method to within a factor $1+X^2$ and is
the method of choice for its simplicity.  Only when truncation forces
$\norm{V}_{L^\infty}$ large, so that $X\gg1$ and the
auxiliary-projector constant inflates by $1+X$, does the composite
machinery pay for itself; that is the situation we examine next.

\subsection{Two benchmark potentials in two dimensions}
\label{subsec:2d-benchmarks}

We apply the method to the two benchmark potentials on $\R^2$ studied
with certified CECR finite element bounds in \cite{LiuJSIAM2026}:
with $\Omega_1:=(-4,4)^2$ and $\Omega_2:=(-8,8)^2$,
\begin{align}
  V_1(x)&=\bigl(|x|^2-1\bigr)^2
  &&\text{(ring potential),}
  && \sigma(\Omega_1)=225,
  \label{eq:V1-def}\\
  V_2(x)&=(x_1^2-1)^2+x_2^2
  &&\text{(anisotropic double well),}
  && \sigma(\Omega_2)=64 .
  \label{eq:V2-def}
\end{align}
For $V_1$, $\sigma(\Omega_1)=\inf_{|x|\ge4}V_1=225$; for
$V_2$, the boundary minimum of $V_2$ on $\partial \Omega_2$ is attained at
$(\pm1,\pm8)$ with value $64$, and $V_2$ increases outside, so
$\sigma(\Omega_2)=64$.  Both values exceed the computed upper bounds for
$k\le5$ by a wide margin, so \cref{lem:neumann-truncation} certifies
$\mu_k\le\lambda_k$ for all reported eigenvalues (a~posteriori
confinement check).

The trial space is the tensor cosine space with $0\le m_1,m_2\le N$,
and
\[
\dim\VN=(N+1)^2, \qquad  \nu_*=\frac{(N+1)^2\pi^2}{4R^2},\qquad R=4\ (V_1),\quad R=8\ (V_2).
\]
The potentials are polynomials of degree four, so all cosine moments are
evaluated in closed form by repeated integration by parts; no quadrature
error enters the Galerkin matrices.  Upper bounds $\lambda_{k,N}$ are the
Rayleigh--Ritz values of the standard matrix $K+P$.  For the lower bounds
we compare the two mechanisms of this paper, which throughout the
numerical sections we name the \emph{auxiliary-projector method}---the
bound of \cref{thm:gap-CN} built on the Laplacian (modal) projection
$\PNo$ of \cref{subsec:gap}---and the \emph{composite-discretisation
method} of \cref{thm:schrodinger-main} (here with $J=m=N/2$).  For the
large-$\norm{V}_\infty$ potentials
\eqref{eq:V1-def}--\eqref{eq:V2-def} the auxiliary-projector bound is far
weaker (by \cref{subsec:gap-limitation,rem:crossover}); the displayed
lower bounds in \cref{tab:2d-V1,tab:2d-V2} are therefore the composite
values.

\begin{remark}[Status of the displayed lower bounds]
\label{rem:projected}
The upper-bound columns in
\cref{tab:2d-V1,tab:2d-V2} are computed certified values.  The
lower-bound columns are evaluated from
$\lambda_{k,N}\nu_*/(\lambda_{k,N}+\nu_*)$, i.e.\ with
$\widehat\lambda_{k,N}$ in \eqref{eq:schro-main-bound} replaced by its
upper estimate $\lambda_{k,N}$ and $s$ by $0$; by
\cref{rem:consistency}, for the recommended parameters the composite
eigenvalues and shift differ from these values only beyond the displayed
digits away from saturation, but the final certified digits are produced
by the companion computation (see the computation plan accompanying this
paper) and the last displayed digit is subject to that confirmation.
\end{remark}

\begin{table}[htbp]
\centering
\caption{Two-sided spectral bounds for $H=-\Delta+V_1$,
  $V_1=(|x|^2-1)^2$ on $\R^2$, truncation $\Omega=(-4,4)^2$,
  $\sigma(\Omega)=225$, $\nu_*=(N+1)^2\pi^2/64$.
  ``Ref.''\ is the $N=48$ cosine-Galerkin value.  CECR FEM enclosures
  from \cite{LiuJSIAM2026} ($402{,}625$ Neumann DOFs,
  $h_{\max}=0.036$); spectral DOFs: $(N+1)^2=1089$ ($N=32$),
  $2401$ ($N=48$).  Gap${}=(\lambda_{k,N}-\lbd_k)/\lambda_{k,N}$.}
\label{tab:2d-V1}
\renewcommand{\arraystretch}{1.15}
\setlength{\tabcolsep}{3.6pt}
\begin{tabular}{@{}c c cc ccc ccc@{}}
\toprule
 & & \multicolumn{2}{c}{CECR FEM \cite{LiuJSIAM2026}}
 & \multicolumn{3}{c}{Spectral, $N=32$}
 & \multicolumn{3}{c}{Spectral, $N=48$}\\
\cmidrule(lr){3-4}\cmidrule(lr){5-7}\cmidrule(lr){8-10}
$k$ & Ref. & LB & UB & $\lbd_k$ & $\lambda_{k,N}$ & Gap\,\%
            & $\lbd_k$ & $\lambda_{k,N}$ & Gap\,\% \\
\midrule
1   & 1.78506 & 1.7547 & 1.8159 & 1.76629 & 1.78506 & 1.05
    & 1.77648 & 1.78506 & 0.48\\
2,3 & 3.84307 & 3.7921 & 3.8943 & 3.75709 & 3.84307 & 2.24
    & 3.80360 & 3.84307 & 1.03\\
4,5 & 6.56084 & 6.4827 & 6.6375 & 6.31417 & 6.56084 & 3.76
    & 6.44660 & 6.56084 & 1.74\\
\bottomrule
\end{tabular}
\end{table}

\begin{table}[htbp]
\centering
\caption{Two-sided spectral bounds for $H=-\Delta+V_2$,
  $V_2=(x_1^2-1)^2+x_2^2$ on $\R^2$, truncation $\Omega=(-8,8)^2$,
  $\sigma(\Omega)=64$, $\nu_*=(N+1)^2\pi^2/256$.
  ``Ref.''\ is the $N=48$ cosine-Galerkin value.  CECR FEM enclosures
  from \cite{LiuJSIAM2026}.  Gap as in \cref{tab:2d-V1}.}
\label{tab:2d-V2}
\renewcommand{\arraystretch}{1.15}
\setlength{\tabcolsep}{3.6pt}
\begin{tabular}{@{}c c cc ccc ccc@{}}
\toprule
 & & \multicolumn{2}{c}{CECR FEM \cite{LiuJSIAM2026}}
 & \multicolumn{3}{c}{Spectral, $N=32$}
 & \multicolumn{3}{c}{Spectral, $N=48$}\\
\cmidrule(lr){3-4}\cmidrule(lr){5-7}\cmidrule(lr){8-10}
$k$ & Ref. & LB & UB & $\lbd_k$ & $\lambda_{k,N}$ & Gap\,\%
            & $\lbd_k$ & $\lambda_{k,N}$ & Gap\,\% \\
\midrule
1 & 2.13779 & 2.0626 & 2.2142 & 2.03421 & 2.13779 &  4.85
  & 2.08953 & 2.13779 & 2.26\\
2 & 3.71303 & 3.5959 & 3.8321 & 3.41138 & 3.71307 &  8.13
  & 3.56983 & 3.71303 & 3.86\\
3 & 4.13779 & 4.0316 & 4.2420 & 3.76657 & 4.13779 &  8.97
  & 3.96073 & 4.13779 & 4.28\\
4 & 5.71303 & 5.5640 & 5.8589 & 5.02877 & 5.71307 & 11.98
  & 5.38094 & 5.71303 & 5.81\\
5 & 6.13779 & 6.0146 & 6.2537 & 5.35495 & 6.13779 & 12.75
  & 5.75606 & 6.13779 & 6.22\\
\bottomrule
\end{tabular}
\end{table}

\subsection{Discussion}

\textbf{Accuracy per degree of freedom.}
The CECR FEM enclosures of \cite{LiuJSIAM2026} use about
$4\times10^5$ degrees of freedom; the spectral computations use
$1089$ ($N=32$) to $2401$ ($N=48$).  For $V_1$ at $N=48$ the spectral
enclosure is strictly inside the FEM enclosure for every reported
eigenvalue, at roughly $170$ times fewer unknowns.  For $V_2$ the larger
truncation radius $R=8$ inflates $\nu_*^{-1}$ by a factor of four, and
the spectral lower bounds at $N=48$ are comparable to (for $k=1$,
slightly better than) the FEM ones, again at two orders of magnitude
fewer unknowns; the gap closes as $O(N^{-2})$, so $N=64$ projects to
about $1.3\%$ for $k=1$.

\textbf{The two mechanisms compared on real data.}
The composite discretisation is essential here.  For $V_1$ the potential
is large, $\norm{V_1}_\infty=961$, so $X=961/\lambda_{1,N}\approx538$ and
the auxiliary-projector constant of \cref{thm:gap-CN} carries the factor
$(1+X)$: at $N=48$ it certifies only $\mu_1\ge0.496$ ($72\%$ gap),
against the composite $\mu_1\ge1.776$ ($0.48\%$ gap) at the same number
of unknowns.  The two bounds are moreover in different convergence
regimes over the computed range $16\le N\le80$: the composite gap already
decays at the predicted $O(N^{-2})$, whereas the auxiliary-projector
bound is still pre-asymptotic---its constant keeps the saturation factor
$\gtrsim1$ until $N\gtrsim80$, so the lower bound barely moves and the
ratio of the two certified gaps grows steadily (from $25\times$ at $N=16$
toward its limit $(1+X)\approx539$).  Matching the composite accuracy
with the auxiliary projector would require about $\sqrt{1+X}\approx23$
times more modes per axis.  This is the quantitative content of
\cref{rem:crossover}.  Conversely, for a moderate bounded potential
($X\lesssim1$) the auxiliary-projector bound of \cref{thm:gap-CN}, which
needs only the single standard matrix $K+P$, is the method of choice.

\textbf{Sharpening the auxiliary-projector form factor.}
The pessimism of the $(1+X)$ factor is an artefact of bounding the form
factor by $\norm{V}_\infty$ alone.  The tail-refined factor $\eta_V'$ of
\cref{lem:opt-etaV}, computed from the \emph{same} standard matrix $K+P$
by one extra largest-eigenvalue solve, measures only how much $V_1v_N$
leaks out of the resolved space; for the smooth $V_1$ this is small, and
it collapses $g^2$ from $538$ to $16.4$, raising the $V_1$ ground-state
bound from $0.496$ ($72\%$ gap) to $1.647$ ($7.7\%$)---now within a
factor $\approx16$ of composite, and \emph{improving with $N$} (the gap
is $21.7\%$ at $N=32$, $7.7\%$ at $N=48$, $3.4\%$ at $N=64$), since
$\eta_V'$ shrinks as $\nu_*$ grows whereas $\min(X,\sqrt X)$ does not, so
the tail-refined bound closes on composite as the space is refined
(\cref{tab:formfactor}).

The refinement helps $V_2$ much less: its gap improves only from $98\%$
to $88\%$.  This is because $\eta_V'^2\sim\operatorname{Var}_\Omega(V)/
\nu_*$, and $V_2$ is intrinsically harder on \emph{both} counts.  Its
well is deeper and its box larger, so its fluctuation is far greater:
$\operatorname{Var}(V_2)\approx1.1\times10^{6}$ against
$\operatorname{Var}(V_1)\approx2.6\times10^{4}$, a factor $\approx43$.
And the larger truncation box $(-8,8)^2$ makes $\nu_*$ four times smaller
at the same $N$.  Both inflate $\eta_V'$, leaving $g^2\approx326$ for
$V_2$; the smaller $\nu_*$ also costs the composite method (its gap is
$2.3\%$ for $V_2$ versus $0.48\%$ for $V_1$).  The conclusion is that the
standard-matrix route is far stronger than the crude $\min(X,\sqrt X)$
suggests---competitive with composite for $V_1$---although composite
keeps the edge and, decisively, a constant free of $\norm{V}_\infty$
altogether.

\begin{table}[htbp]
\centering
\caption{Effect of the form-factor choice on the auxiliary-projector
  ground-state lower bound $\lbd_1$ (\cref{thm:gap-CN}) at $N=48$ and
  $N=64$, against the composite bound (\cref{thm:schrodinger-main});
  gap${}=(\lambda_{1,N}-\lbd_1)/\lambda_{1,N}$.  The tail-refined
  $g^2=\eta_V'^2/\lambda_{1,N}$ falls as $N$ grows---from $16.4$ to
  $12.0$ for $V_1$ and $326$ to $290$ for $V_2$---so its bound improves
  faster than the $N$-independent $\min(X,\sqrt X)$ and closes on
  composite.  ``$\min(X,\sqrt X)$'' is \cref{lem:bounded-etaV};
  $\eta_V'$ is the tail-refined factor of \cref{lem:opt-etaV}
  (double-precision values, status as in \cref{rem:projected}).}
\label{tab:formfactor}
\renewcommand{\arraystretch}{1.2}
\setlength{\tabcolsep}{6pt}
\begin{tabular}{@{}l cc cc@{}}
\toprule
 & \multicolumn{2}{c}{$V_1$ on $(-4,4)^2$}
 & \multicolumn{2}{c}{$V_2$ on $(-8,8)^2$}\\
\cmidrule(lr){2-3}\cmidrule(lr){4-5}
form factor ($\lbd_1$, gap\,\%) & $N{=}48$ & $N{=}64$ & $N{=}48$ & $N{=}64$ \\
\midrule
$\min(X,\sqrt X)$ (\cref{lem:bounded-etaV})
  & $0.496$\ ($72$) & $0.721$\ ($60$) & $0.048$\ ($98$) & $0.083$\ ($96$)\\
tail-refined $\eta_V'$ (\cref{lem:opt-etaV})
  & $1.647$\ ($7.7$) & $1.724$\ ($3.4$) & $0.250$\ ($88$) & $0.444$\ ($79$)\\
\midrule
composite (\cref{thm:schrodinger-main})
  & $1.776$\ ($0.48$) & $1.780$\ ($0.27$) & $2.090$\ ($2.3$) & $2.110$\ ($1.3$)\\
\bottomrule
\end{tabular}
\end{table}

\textbf{Structural cross-checks.}
The computed values reproduce two exact structural features.  For $V_1$,
the $O(2)$ symmetry forces the doublets $k=2,3$ and $k=4,5$, visible as
coinciding rows in \cref{tab:2d-V1}.  For $V_2$, separability gives
$\lambda_{(m,n)}=\mu_m^{x}+(2n-1)$, where $\mu^x_m$ are the eigenvalues
of the one-dimensional double-well operator
$-\partial_{x}^2+(x^2-1)^2$ and $2n-1$ the harmonic oscillator levels;
the reference values in \cref{tab:2d-V2} satisfy
$\lambda_3=\lambda_1+2$, $\lambda_4=\lambda_2+2$, $\lambda_5=\lambda_1+4$
to all displayed digits, giving
$\mu^x_1=1.13779$ and $\mu^x_2=2.71303$.  These identities provide an
independent validation of the results.

\subsection{A disk domain: the Fourier--Bessel realisation}
\label{subsec:disk}

The framework is not tied to a box.  We bound the
ground states of $H=-\Delta+V_1$ and $H=-\Delta+V_2$ on the disk
$D(R)=\{|x|<R\}$ ($R=3$, Neumann) using the \emph{closed-form} Laplacian
eigensystem
$\varphi_{n,m}=J_n(k_{n,m}r)\{\cos n\theta,\sin n\theta\}$,
$k_{n,m}=j'_{n,m}/R$ ($j'_{n,m}$ the zeros of $J_n'$),
$\lambda_{n,m}=k_{n,m}^2$.  The trial space collects all modes with
$\lambda_{n,m}\le\Lambda_{\mathrm{cut}}$, so $\nu_*$ is the first omitted
Bessel eigenvalue and the closed-form constant $\nu_*^{-1/2}$ of
\cref{lem:spectral-CN} applies verbatim.  Because this basis is the Laplacian
eigensystem---\emph{independent of the potential}---it serves $V_2$ exactly as it
serves $V_1$; only the assembly differs.

For the \emph{radial} $V_1$ the Galerkin matrix is block-diagonal in the angular
index $n$, reducing the 2D problem to small one-dimensional radial problems and
making the $O(2)$ doublets $\lambda_2{=}\lambda_3$, $\lambda_4{=}\lambda_5$
\emph{exactly} degenerate; its ground-state value $\lambda_1=1.785063$ agrees with
the box result of \cref{tab:2d-V1} to all six displayed digits, an independent
cross-check across two unrelated discretisations.  The \emph{anisotropic}
$V_2=(x_1^2-1)^2+x_2^2=A_0(r)+A_2(r)\cos2\theta+A_4(r)\cos4\theta$ is not radial:
it couples the angular sectors $n\mapsto n,\,n{\pm}2,\,n{\pm}4$, so the matrix is
\emph{banded} in $n$ rather than block-diagonal and the doublets are only
approximately degenerate.  The bound and its constant are unchanged; $V_2$ merely
forfeits the radial reduction.  Both matrices are assembled by tensor
Gauss--Legendre quadrature in $(r,\theta)$ (verified remainder).

\Cref{tab:disk-VQ} reports, for $\lambda_1$ at the fixed $R=3$, the auxiliary-projector
bound $\lbd_1=\lambda_{1,N}/\bigl(1+(1+g^2)\lambda_{1,N}/\nu_*\bigr)$ for three choices
of the form factor $g$: the crude $\min(X,\sqrt X)$ of \cref{lem:bounded-etaV}, the
tail-refined $\eta_V'$ of \cref{lem:opt-etaV} ($g^2=\eta_V'^2/\lambda_{1,N}$), and the
form-factor-free limit $g=0$, which reduces to the projected value
$\lambda_{1,N}\nu_*/(\lambda_{1,N}+\nu_*)$ and serves as a (non-achievable) reference.
The crude bound is poor for \emph{both} potentials ($\approx35\%$).  The tail-refined
factor collapses it: on this disk $\eta_V'^2\approx2$ for both (against the worst-case
$\norm{V}_\infty\approx64$), because the small radius keeps $\nu_*$ large and
$\norm{V}_\infty$ modest, so multiplication by $V$ leaks little out of the trial
space.  The resulting bound is within a factor of two of the $g=0$ ideal---$2.95\%$ vs
$1.45\%$ for $V_1$, $3.35\%$ vs $1.73\%$ for $V_2$---from the \emph{single} standard
matrix $K+P$ plus one largest-eigenvalue solve, and \emph{without} the
$\norm{V}_\infty$-dependence of the crude factor.

Two structural points deserve brief note.  First, the composite method of
\cref{thm:schrodinger-main} does \emph{not} transfer to the disk: its slaving
identity (\cref{lem:slaving}) needs a bandlimited weight closed under
multiplication with the trial space, which the tensor-cosine box supplies but
the radial Bessel modes do not, since a product of two Bessel modes is not a
finite combination of low-eigenvalue radial eigenmodes.  This costs nothing
here---the auxiliary-projector bounds above are fully realised from the single
standard matrix $K+P$ (and $Q$ for $\eta_V'$), and on the small disk the
tail-refined factor already reaches the $g=0$ ideal to within a factor of two.
Second, the disk furnishes a \emph{matched} comparison unavailable on the box:
at equal domain $D(3)$ one has $\norm{V_1}_\infty=\norm{V_2}_\infty=64$ and
identical $\nu_*$, and the two potentials then come out essentially level
($2.95\%$ vs $3.35\%$), showing that the apparent difficulty of $V_2$ on the
box (\cref{tab:formfactor}) is an artefact of its larger confinement box, not a
property of the potential.

\begin{table}[htbp]
\centering
\caption{Auxiliary-projector ground-state lower bounds $\lbd_1$ on the disk $D(3)$
  (Neumann, Fourier--Bessel basis, $\mathrm{DOF}=285$, $\nu_*=121.1$), for the
  radial $V_1$ and the anisotropic $V_2$, by three form-factor choices;
  gap${}=(\lambda_{1,N}-\lbd_1)/\lambda_{1,N}$.  All three rows are the same
  auxiliary-projector bound with form factor $g=\min(X,\sqrt X)$, $g=\eta_V'/
  \sqrt{\lambda_{1,N}}$, and $g=0$ respectively; the tail-refined $\eta_V'$
  (\cref{lem:opt-etaV}) comes close to the form-factor-free ($g=0$) ideal for both
  potentials.}
\label{tab:disk-VQ}
\renewcommand{\arraystretch}{1.2}
\setlength{\tabcolsep}{8pt}
\begin{tabular}{@{}l cc@{}}
\toprule
 & $V_1$ on $D(3)$ & $V_2$ on $D(3)$\\
\cmidrule(lr){2-2}\cmidrule(lr){3-3}
lower bound ($\lbd_1$, gap\,\%) & $\lambda_{1,N}{=}1.78506$ & $\lambda_{1,N}{=}2.13453$\\
\midrule
crude $\min(X,\sqrt X)$ (\cref{lem:bounded-etaV})
  & $1.157$\ ($35.2$) & $1.381$\ ($35.3$)\\
tail-refined $\eta_V'$ (\cref{lem:opt-etaV})
  & $1.732$\ ($2.95$) & $2.063$\ ($3.35$)\\
\midrule
form-factor-free, $g{=}0$ (ideal)
  & $1.759$\ ($1.45$) & $2.098$\ ($1.73$)\\
\bottomrule
\end{tabular}
\end{table}

\section{Sharpness, monotonicity, and index saturation}
\label{sec:sharpness}

This section records the precise quality limits of the bounds.  The
sharpness of $C_N$ was already proved in \cref{lem:spectral-CN}; we add
the monotone convergence of the bounds and the saturation phenomenon
near the truncation index.

\begin{proposition}[Monotone convergence]
\label{prop:monotone}
In the eigenbasis setting of \cref{lem:spectral-CN} with nested
retained sets $\Lambda_N\subsetneq\Lambda_{N+1}$, for each fixed $k$ the
bounds satisfy
$\lbd_{k,N}<\lbd_{k,N+1}<\lambda_k$ and
$\lbd_{k,N}\to\lambda_k$ as $N\to\infty$.
\end{proposition}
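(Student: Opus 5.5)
The plan is to reduce everything to the closed form \eqref{eq:eigenbasis-bound} and study it as a function of the single parameter $\lambda_*$. First I will fix the reading of the hypothesis that makes $\lbd_{k,N}$ a bound for $\lambda_k$. I take the retained sets to be initial segments $\Lambda_N=\{1,\ldots,M_N\}$ with $M_N<M_{N+1}$, as in the benchmark of \cref{subsec:laplacian-benchmark}.

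For such sets and every $k\le M_N$, the discrete eigenvalue is exact, $\lambda_{k,N}=\lambda_k$, independently of $N$. Hence
\[
  \lbd_{k,N}=\frac{\lambda_k\,\lambda_*^{(N)}}{\lambda_k+\lambda_*^{(N)}}
  =\Bigl(\frac1{\lambda_k}+\frac1{\lambda_*^{(N)}}\Bigr)^{-1},
\]
where $\lambda_*^{(N)}=\lambda_{M_N+1}$ is the first omitted eigenvalue from \eqref{eq:lambda-star}. The map $t\mapsto(1/\lambda_k+1/t)^{-1}$ is strictly increasing on $(0,\infty)$, tends to $\lambda_k$ as $t\to\infty$, and stays strictly below $\lambda_k$ for every finite $t$. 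So the three claims reduce to three properties of $\lambda_*^{(N)}$:
\begin{itemize}
\item it is finite, which gives $\lbd_{k,N+1}<\lambda_k$;
\item it is strictly increasing in $N$, which gives $\lbd_{k,N}<\lbd_{k,N+1}$;
\item it tends to $\infty$, which gives $\lbd_{k,N}\to\lambda_k$.
\end{itemize}

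Finiteness is immediate because the complement of $\Lambda_N$ is nonempty. Divergence follows from $M_N\to\infty$ (the sets are strictly nested) together with $\lambda_j\to\infty$, which holds by compactness of $\gamma$.

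Monotonicity comes from the definition as a minimum: $\N\setminus\Lambda_{N+1}\subset\N\setminus\Lambda_N$, so $\lambda_*^{(N+1)}\ge\lambda_*^{(N)}$.

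The main obstacle is the \emph{strict} inequality, which can fail if an eigenvalue is repeated. For example, if $\lambda_{M_N+1}=\lambda_{M_N+2}$ and only one of the two indices is added, $\lambda_*$ does not move. I will handle this by requiring that each $\Lambda_N$ contains whole eigenspaces, that is, $\lambda_{M_N}<\lambda_{M_N+1}$. Under that convention $\lambda_*^{(N+1)}=\lambda_{M_{N+1}+1}>\lambda_{M_{N+1}}\ge\lambda_{M_N+1}=\lambda_*^{(N)}$, which gives strictness. Without the convention the argument still yields the non-strict version $\lbd_{k,N}\le\lbd_{k,N+1}$, and I will note this.

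If the retained sets are allowed not to be initial segments, the argument has a second difficulty: $\lambda_{k,N}$ may exceed $\lambda_k$ and decrease with $N$. In that case I would first show $\lambda_{k,N}=\lambda_k$ once $\Lambda_N\supset\{1,\ldots,k\}$, and then apply the same argument for all sufficiently large $N$.
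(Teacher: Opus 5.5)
Your proof is essentially the paper's: both use $\lbd_{k,N}=\lambda_k\lambda_*^{(N)}/(\lambda_k+\lambda_*^{(N)})$, its strict increase in $\lambda_*^{(N)}$, and the fact that $\lambda_*^{(N)}$ is nondecreasing and divergent, and your initial-segment and whole-eigenspace conventions supply the strict inequality $\lbd_{k,N}<\lbd_{k,N+1}$, which the paper's proof only secures along a subsequence. One caveat on your final paragraph: strict nesting alone does not force $\Lambda_N\supset\{1,\ldots,k\}$ eventually (for example, $\Lambda_N=\{2,\ldots,N+1\}$ keeps $\lambda_*^{(N)}=\lambda_1$ and hence $\lbd_{1,N}$ constant), so that extension additionally needs $\bigcup_N\Lambda_N=\N$.
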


\begin{proof}
Here $\lambda_{k,N}=\lambda_k$ and
$\lbd_{k,N}=\lambda_k\lambda_*^{(N)}/(\lambda_k+\lambda_*^{(N)})$ is
strictly increasing in $\lambda_*^{(N)}$.  Since
$\Lambda_N\subsetneq\Lambda_{N+1}$, the omitted sets satisfy
$\mathcal O_{N+1}\subsetneq\mathcal O_N$, so
$\lambda_*^{(N+1)}\ge\lambda_*^{(N)}$, with strict increase along the
subsequence where the previous minimiser is absorbed; and
$\lambda_*^{(N)}\to\infty$ because $\lambda_j\to\infty$.
\end{proof}

\begin{proposition}[Saturation; 1D sine-Galerkin]
\label{prop:saturation}
For the setting of \eqref{eq:1d-closed-form}:
\begin{enumerate}[label=(\roman*),leftmargin=2.2em]
\item for fixed $k$, the relative gap is
  $k^2/(k^2+(N+1)^2)\le k^2/(N+1)^2=O(N^{-2})$;
\item for $k=\lfloor\alpha N\rfloor$, $0<\alpha\le1$, the gap tends to
  $\alpha^2/(1+\alpha^2)$ as $N\to\infty$; in particular for $k=N$ it
  tends to $1/2$ regardless of how accurate the Galerkin eigenvalue is;
\item to certify mode $k$ with relative gap $\delta\ll1$ it suffices to
  take $N\ge k/\sqrt\delta$ (approximately).
\end{enumerate}
\end{proposition}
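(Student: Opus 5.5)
All three items follow from the closed form \eqref{eq:1d-closed-form}, so the plan is to write the relative gap explicitly and then treat each item as elementary calculus. From \eqref{eq:1d-closed-form}, $\lambda_{k,N}=\lambda_k=k^2\pi^2$ and $\lbd_{k,N}=k^2\pi^2(N+1)^2/((N+1)^2+k^2)$. The relative gap is therefore
\[
  \frac{\lambda_k-\lbd_{k,N}}{\lambda_k}
  =1-\frac{(N+1)^2}{(N+1)^2+k^2}
  =\frac{k^2}{k^2+(N+1)^2}.
\]
This identity also agrees with \eqref{eq:relative-gap}, where $C_N^2\lambda_{k,N}=k^2/(N+1)^2$.

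For (i), drop $k^2$ from the denominator. This gives the bound $k^2/(N+1)^2$, which is $O(N^{-2})$ for fixed $k$.

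For (ii), set $k=\lfloor\alpha N\rfloor$ and divide numerator and denominator by $N^2$. Since $\lfloor\alpha N\rfloor/N\to\alpha$ and $(N+1)/N\to1$, continuity of $t\mapsto t^2/(t^2+1)$ gives the limit $\alpha^2/(1+\alpha^2)$; taking $\alpha=1$ gives $1/2$. To stress that the saturation is intrinsic, I would note that $\lambda_{k,N}$ is exact here. The loss therefore comes entirely from the factor $C_N^2\lambda_{k,N}=k^2/(N+1)^2\approx\alpha^2$ in \eqref{eq:relative-gap}, and not from any approximation error.

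For (iii), solve $k^2/(k^2+(N+1)^2)\le\delta$. This is equivalent to $(N+1)^2\ge k^2(1-\delta)/\delta$, which holds whenever $N+1\ge k/\sqrt\delta$. So $N\ge k/\sqrt\delta$ suffices; it is slightly stronger than needed, and it matches the approximate form in the statement for $\delta\ll1$.

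No step is a real obstacle. The only care needed is in (ii): the floor function must be handled with a squeeze, $\alpha N-1<k\le\alpha N$, and $\alpha$ must be small enough, or $N$ large enough, that $k\ge1$ lies in the admissible range $1\le k\le N$. Both conditions hold for $0<\alpha\le1$ and $N\ge1/\alpha$.
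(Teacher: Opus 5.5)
Your proposal is correct and follows the paper's route: the paper's proof derives the exact relative-gap formula $k^2/(k^2+(N+1)^2)$ from \eqref{eq:1d-closed-form} and calls (i)--(iii) elementary consequences, and you carry those manipulations out explicitly. Your floor-function squeeze and the admissibility check $N\ge 1/\alpha$ are details the paper leaves implicit, and your (iii) shows that $N\ge k/\sqrt\delta$ is a rigorous sufficient condition, not merely an approximate one.
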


\begin{proof}
All three statements follow from the exact gap formula
$(\lambda_k-\lbd_{k,N})/\lambda_k=k^2/(k^2+(N+1)^2)$ by elementary
manipulation.
\end{proof}

The $50\%$ saturation in (ii) is not a defect of the constant (which is
optimal by \cref{lem:spectral-CN}) but an information-theoretic limit of
the single-constant framework: given only $C_N$ and $\lambda_{k,N}$,
formula \eqref{eq:liu-bound} is the best possible conclusion, since the
data are consistent with $\lambda_k$ ranging over
$[\lbd_{k,N},\infty)$.  In practice one simply reports modes
$k\lesssim N/3$, for which the saturation effect is below $10\%$.  The
same structure governs the Schr\"odinger bounds of
\cref{thm:gap-CN,thm:schrodinger-main} through the products
$(C_N^{(V)})^2\lambda_{k,N}$ and $\widehat\lambda_{k,N}/\nu_*$.

\section{Concluding remarks}
\label{sec:conclusion}

We have extended the projection-based guaranteed lower-bound framework
from FEM to spectral Galerkin methods.  Four results carry
the paper: (i) the closed-form, optimal spectral constant
$C_N=\lambda_{M+1}^{-1/2}$ for eigenbasis trial spaces
(\cref{lem:spectral-CN}); (ii) the projection-gap constant
\cref{thm:gap-CN}, which certifies Schr\"odinger eigenvalues from the
\emph{standard} spectral Galerkin matrix through a single \emph{potential
form factor} $\eta_V$ (\cref{def:etaV}), with a fully explicit value
for bounded $V$; (iii) the composite
two-projection construction proving that the unmodified spectral-gap
constant governs $-\Delta+V$ with no dependence whatsoever on
$\norm{V}_{L^\infty}$ (\cref{thm:schrodinger-main}); and (iv) the resulting
fully computable two-sided enclosures for confining potentials on $\R^d$
via Neumann truncation \eqref{eq:bound-chain}.  In two dimensions the
spectral enclosures match or surpass certified CECR finite element
enclosures at two orders of magnitude fewer degrees of freedom.

One limitation delineates the method's scope honestly: the
certified gap closes at the algebraic rate $O(\nu_*^{-1})=O(N^{-2})$
even when the Galerkin eigenvalues converge spectrally; whether the
exponential accuracy of the upper bounds can be transferred to certified
lower bounds (e.g.\ by a Lehmann--Goerisch post-processing seeded with
the present unconditional bounds) is the main open problem.  The
extension of the auxiliary-projector mechanism to singular potentials
with $\norm{V}_{L^\infty}=\infty$, previewed in
\cref{rem:coulomb-companion} and decisive for Coulomb-type quantum
chemistry, is developed in a forthcoming companion paper.

\end{document}